\newif\ifTwoColumn
\newif\ifTechReport
\def\ps@pprintTitle{%
 \let\@oddhead\@empty
 \let\@evenhead\@empty
 \def\@oddfoot{\centerline{\thepage}}%
 \let\@evenfoot\@oddfoot}
\newtheorem{theorem}{Theorem}
\newtheorem{lemma}[theorem]{Lemma}
\newtheorem{assumption}{Assumption}
\newtheorem{proposition}[theorem]{Proposition}
\newtheorem{definition}{Definition}
\newcommand{\mbb}{\mathbb}
\renewcommand{\Re}{\mbb{R}}
\newcommand{\set}[2]{\left\{ #1\ \left| \ #2 \right. \right\}}
\begin{document}

\begin{frontmatter}

\title{Dynamic Programming for Optimal Delivery Time Slot Pricing\tnoteref{mytitlenote}}
\tnotetext[mytitlenote]{This work was supported by SIA Food Union Management.}
\author[addr]{Denis Lebedev\corref{corr}}
\cortext[corr]{Corresponding author}
\ead{denis.lebedev@eng.ox.ac.uk}
\author[addr]{Paul Goulart}
\ead{paul.goulart@eng.ox.ac.uk}
\author[addr]{Kostas Margellos}
\ead{kostas.margellos@eng.ox.ac.uk}
\address[addr]{Department of Engineering Science, University of Oxford, Oxford, OX1 3PJ, United Kingdom}
\begin{abstract}
	We study the dynamic programming approach to revenue management in the context of attended home delivery. We draw on results from dynamic programming theory for Markov decision problems to show that the underlying Bellman operator has a unique fixed point. We then provide a closed-form expression for the resulting fixed point and show that it admits a natural interpretation. Moreover, we also show that -- under certain technical assumptions -- the value function, which has a discrete domain and a continuous codomain, admits a continuous extension, which is a finite-valued, concave function of its state variables, at every time step. This result opens the road for achieving scalable implementations of the proposed formulation in future work, as it allows making informed choices of basis functions in an approximate dynamic programming context. We illustrate our findings on a simple numerical example and provide suggestions on how our results can be exploited to obtain closer approximations of the exact value function.
\end{abstract}

\begin{keyword}
Dynamic programming \sep Revenue management \sep Discrete convex analysis
\end{keyword}

\end{frontmatter}


\section{Introduction}\label{sec:intro}
The expenditure of US households on online grocery shopping could reach \$100 billion in 2022 according to the \citet{FMI2018}. Although growth forecasts vary and more conservative estimates lie, for example, at \$30 billion for the year 2021 \citep{PITCHBOOK2017}, the overall trend is clear: The online grocery sector is likely to grow if some of its main challenges can be overcome. 

One of these challenges is managing the logistics as one of the main cost-drivers. In particular, one can seek to exploit the flexibility of customers by offering delivery options at different prices to create delivery schedules that can be executed in a cost-efficient manner. To achieve this, recent proposals include giving customers the choice between narrow delivery time windows for high prices and \textit{vice versa} \citep{CAMPBELL2006} or charging customers different prices based on the area and their preferred delivery time \citep{ASDEMIR2009,YANG2016,YANG2017}. 

In this paper, we focus on the latter. We refer to the problem of finding the profit-maximising delivery slot prices as the \emph{revenue management problem in attended home delivery}, where ``attended'' refers to the requirement that customers need to be present upon delivery of the typically perishable goods, which is in contrast to, for example, standard mail delivery. Note that attended home delivery problems are more complex than standard delivery services, since goods need to be delivered in time windows that are pre-agreed with the customers. 

We adopt a dynamic programming (DP) model of an expected profit-to-go function, the value function of the DP, given the current state of orders and time left for customers to book a delivery slot. This DP was initially devised in the fashion industry \citep{GALLEGO1994}, but subsequently adopted and refined by the transportation sector and the attended home delivery industry \citep{YANG2016}. 

To find the optimal delivery slot prices, we need to compute the value function (at least approximately) for all states and times. The main challenge is that the state space of the DP grows exponentially with the set of delivery time slots, i.e.\ it suffers from the ``curse of dimensionality''. This means that for industry-sized problems, due to the prohibitively large number of states, the value function cannot be computed exactly, even off-line. Our ultimate objective is to compute improved value function approximations. Therefore, we study in this paper how the value function of the exact DP behaves mathematically in time and across state variables.

We show that the underlying DP operator has a unique fixed point. We then provide a closed-form expression of the resulting fixed point and derive a natural interpretation. Furthermore, we show that -- under certain technical assumptions -- for all time steps in the dynamic program, the value function admits a continuous extension, which is a finite-valued, concave function of its state variables. This result opens the road for achieving scalable implementations of the proposed formulation in future work, as it becomes possible to make informed choices of basis functions in an approximate dynamic programming context.

Improved value function approximations could finally be used for calculating optimal delivery slot prices. This has been shown by \citet{DONGETAL2009}, who prove that a unique set of optimal delivery slot prices exists, which can be found using Newton root search algorithms or using the Lambert-$W$ function as shown in \ref{ap:uncon} if estimates of the value function are known for all states and times.

Our paper is structured as follows: In the remainder of Section \ref{sec:intro}, we introduce some notation. In Section \ref{sec:rm_problem_form}, we define the revenue management problem in attended home delivery and its DP formulation. In Section \ref{sec:res}, we present our main results, Theorem \ref{th:fixedpoint}, which analytically characterises the fixed point of the DP, and Theorem \ref{th:v_conc}, which shows that there exists a continuous extension of the value function that is a finite-valued, concave function in its state variables at every time step. Section \ref{sec:proofs} contains reformulations of the DP into mathematically more convenient forms and develops supporting results leading to the proofs of the main results. Section \ref{sec:ex} presents a numerical illustration of the proposed scheme, while Section \ref{sec:conc} concludes the paper and suggests directions for future research. The Appendix contains the proofs of results not included in the main body of the paper.

\emph{Notation:}
Let	$\mathbf{1}$ denote a vector with all elements equal to $1$. Given some $s$, let $1_s$ be a vector of all zeros apart from the $s$-th entry, which equals 1. Furthermore, we define the convention that $1_0$ is a vector of zeros. Let $\Re_{+(+)}$ be the non-negative (positive) real numbers, let $\mathbb{Z}$ be the integers and let dim$(\cdot)$ denote the dimension of its argument. Let conv$(\cdot)$ denote the convex hull of its argument. We say that a function exhibits a monotonic behaviour if the monotonicity property holds element-wise, e.g.\ a function $f: \Re^N \mapsto \Re$ is monotonically increasing over its domain if $f(y) > f(x)$ for all $(x,y)$, such that at least one element of $y$ is greater than the corresponding element of $x$.
\section{Revenue Management Problem Formulation}\label{sec:rm_problem_form}
In this section, we derive a discrete-state formulation of the revenue management problem in attended home delivery. 
\subsection{Problem Statement}
We model an online business that delivers goods to locations of known customers. We consider a local approximation of the revenue management problem by dividing the service area geographically into a set of non-overlapping rectangular sub-areas, where the customers in each sub-area operate independently by being served by one delivery vehicle. This model resembles the setting in the work of \citet{YANG2017}. Due to this independence, we only consider a single sub-area, while our development directly extends to the case of multiple sub-areas. To cover all sub-areas in practice, it is possible to simply replicate our approach for every delivery sub-area, which would increase computational complexity linearly in the number of sub-areas, but which is easily parallelised.

We consider a finite booking horizon with possibly unequally-spaced time steps indexed by $t \in T:=\{1,2,\dots,\bar{t}\,\}$. Based on the development of \citet[Section 4.3]{YANG2016}, we obtain a customer arrivals model using a Poisson process with \textit{time-invariant} event rate $\lambda \in (0,1)$ for all $t\in T$ from a Poisson process with homogeneous time steps, but \textit{time-varying} event rate. 

Customers can choose from a number of (typically 1-hour wide) delivery time windows, which we call slots $s \in S$, where $S:=\{1,2,\dots,\bar{s}\}$. Let $s=0$ correspond to a customer not choosing any slot. Each delivery slot $s$ is assigned a delivery charge $d_{s} \in \left[\underline{d},\bar{d}\,\right] \cup \infty $, for some minimum allowable charge $\underline{d} \in \mathbb{R}$ (which is typically, though not necessarily, non-negative) and some maximum allowable charge $\bar{d} \geq \underline{d}$. The role of $d_{s}=\infty$ is a convention to indicate that slot $s$ is not offered. This is explained in more detail when introducing the customer choice model below. We define the delivery charge vector \mbox{$d:=[d_1,d_2,\dots,d_{\bar{s}}]^{\intercal}$}. Let the set of admissible delivery charge vectors be \mbox{$D:=\set{d}{d_{s} \in \left[\underline{d},\bar{d}\,\right] \text{ for all } s \in S}$}. 

For each delivery slot \mbox{$s \in S$}, we denote the number of placed orders by $x_s\in \mathbb{Z}$. We also define $x :=[x_1,x_2,\dots,x_{\bar{s}}]^{\intercal}\in \mathbb{Z}^{\bar{s}}$ as well as $X:=\set{x}{0\leq x_{s} \leq \bar{x}_{s} \text{ for all } s\in S}$, where $\bar{x}_{s}$ is a scalar indicating the maximum number of deliveries that can be fulfilled in slot $s$. In general, we do not require the maximum number of deliveries to be the same for all slots, e.g.\ since this will depend on traffic patterns in the delivery area. Examples of computing this quantity can be found in \citet[Section 4]{YANG2017}. Let us define $\bar{x}:=[\bar{x}_1,\bar{x}_2,\dots,\bar{x}_{\bar{s}}]^{\intercal}$ as well as the set of feasible slots $F(x)=\set{s\in S}{x+1_s \in X}$. Let $r \in \Re$ denote the expected net revenue of an order, i.e.\ expected revenue minus costs prior to delivery. This is assumed to be invariant across all orders. We define 
\begin{equation}\label{eq:cost}
C(x):=
\begin{dcases}
C_{\mathbb{\Re_+}}(x), & \text{if }x \in X \\
\infty & \text{otherwise,}
\end{dcases}
\end{equation}
where $C_{\Re_+}: X \to \Re_+$ is a given function. The function $C$ approximates the delivery cost to fulfil the set of orders $x$. The delivery cost cannot be computed exactly, as it is the solution to a vehicle routing problem with time windows, which is intractable for industry-sized applications \citep{TOTH2014}.

Let the probability that a customer chooses delivery slot $s$ if offered prices $d$ be $\Pi_{s}(d)$, such that $d \mapsto \Pi_{s}(d)\in[0,1)$ for all $s \in S$. Note that $\sum_{s \in S}\Pi_{s}(d) = 1-\Pi_{0}(d)$, where $\Pi_{0}>0$ denotes the probability of a customer leaving the online ordering platform without choosing any delivery slot. A typical choice for $\Pi_{s}$ is the multinomial logit model that was also used in \citet{YANG2017}:
\begin{equation}\label{eq:mnl}
\Pi_{s}(d):=\frac{\exp(\beta_c+\beta_s+\beta_d d_{s})}{\sum_{k\in S}\exp(\beta_c+\beta_k+\beta_dd_{k})+1},
\end{equation}
where $\beta_c \in \Re$ denotes a constant offset, $\beta_s \in \Re$ represents a measure of the popularity for all delivery slots and $\beta_d < 0$ is a parameter for the price sensitivity. Note that the no-purchase utility is normalised to zero, i.e.\ for the no-purchase ``slot'' $s=0$, we have a no-delivery ``charge'' $d_0=0$, such that \mbox{$\beta_c+\beta_0+\beta_d d_0 = \beta_c+\beta_0 = 0$} and hence, the $1$ in the denominator of \eqref{eq:mnl} arises from $\exp(\beta_c+\beta_0)=1$.

Note that our results on the fixed point computation do not depend on the particular form of the customer choice model. We require only that it is a probability distribution and that, for all $s \in S$ and as $d_{s} \to \infty$, we have $\Pi_{s}(d)$ tending to zero with a faster than linear rate of convergence; otherwise, the expected profit-to-go as defined below will be unbounded.

For convenience, let the probability that a customer arrives and chooses slot $s$ given prices $d$ be denoted by $p_{s}(d) :=\lambda\Pi_{s}(d)$. We define $p(d):=[p_1(d),p_2(d),\dots,p_{\bar{s}}(d)]^{\intercal}$ and $P:=\set{p(d)}{d\in D}$. Finally, it is to be understood that all sums over $s$ are always computed over the entire set $S$.
\subsection{Dynamic Programming Formulation}
We can express the problem described above as a DP. The expected profit-to-go closely resembles the DP formulation in \citet{YANG2017} and we define it as
\ifTwoColumn
\begin{equation}
\begin{split}\label{eq:rmA}
V_^A{t-1}(x)&:=\underset{d}{\max}\left\{\sum_{a,s}p_{a,s}(d)\left[r+d_{a,s}+V_^A{t}(x+1_{a,s}) \right.\right.\\
&\phantom{:=\;}\left.\left.-V_^A{t}(x)\right]\vphantom{\sum_{1}}+V_^A{t}(x) \right\}\text{ for all } x \in X, t\in T,\\
&\phantom{:=\;}\text{ where } V_{\bar{t}+1}^A(x) = -C(x)\text{ for all } x \in X,
\end{split}
\end{equation}
\else
\begin{equation}\label{eq:rm}
\begin{split}
V_t(x):=&\underset{d\in D}{\max}\left\{\sum_{s}p_{s}(d)\left[r+d_{s}+V_{t+1}(x+1_{s})-V_{t+1}(x)\right]+V_{t+1}(x)\right\},\\ &\text{for all } (x,t) \in X\times T,\text{ where } V_{\bar{t}+1}(x) = -C(x)\quad \forall x \in X,
\end{split}
\end{equation}
\fi
i.e.\ $C(\cdot)$ denotes the terminal condition. The difference $V_t(x)-V_t(x+1_{s})$ represents the value foregone by accepting an additional (discrete spatial) order, which in economic terms is the opportunity cost of an order. Note that -- similar to \citet{YANG2017} -- we ignore any vehicle load capacity constraints in the problem, as they are much less restricting than the time constraints on the delivery slots. Therefore, including the vehicle load capacity constraints would only increase computational costs, but would not substantially improve the decision policy. For convenience in the sequel, we define the DP operator $\mathcal{T}$ to express \eqref{eq:rm} in a more compact form as
\begin{equation}\label{eq:abstract_operator}
V_{t-1}:=\mathcal{T} V_{t}, \text{ for all } t\in T.
\end{equation}
\section{Infinite and Finite Time Horizon Results}\label{sec:res}

\subsection{Infinite Time Horizon Result}\label{subsec:inf_res}
We first consider the infinite horizon case, i.e.\ where $\bar{t} = \underset{\tau \to \infty}{\lim} \tau$.	In this scenario, we can find a fixed point of the DP described by \eqref{eq:abstract_operator} based on the following assumption.
\begin{assumption}\label{ass:profit_pos}
	The marginal cost of an additional, feasible order is always smaller than the maximum marginal profit, i.e.\ $C(x+1_{s})-C(x)\leq \bar{d}+r$, for all \mbox{$(x,s)\in X\times F(x)$}.
\end{assumption}
Assumption \ref{ass:profit_pos} is not restrictive, since it offers the means to ensure that every additional, feasible order can generate profit. Otherwise, the delivery slot prices, which maximise \eqref{eq:rm}, would always be $d_{s}=\infty$ for all $s\in S$, resulting in not offering any slots. Based on the aforementioned definitions and Assumption \ref{ass:profit_pos}, we formulate our first result, proof of which is deferred to Section \ref{sec:inf_proof}.
\begin{theorem}\label{th:fixedpoint}
	Under Assumption \ref{ass:profit_pos}, the unique fixed point of \eqref{eq:abstract_operator} is given by
	\begin{equation}
	\label{eq:dp_fixedpoint}
	V^{*}(x):= (\bar{d}+r)\mathbf{1}^\intercal (\bar{x}-x)-C(\bar{x}), \text{ for all } x\in X.
	\end{equation}	
\end{theorem}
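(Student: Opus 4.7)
The proof naturally splits into two parts: checking that the claimed expression is a fixed point, and establishing its uniqueness.

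For the verification, my plan is to substitute the candidate $V^*(x)=(\bar{d}+r)\mathbf{1}^\intercal(\bar{x}-x)-C(\bar{x})$ directly into the Bellman operator. Because $V^*$ is affine in $x$ with the same slope $-(\bar{d}+r)$ in every coordinate, every relevant forward difference collapses to
\[
V^*(x+1_s)-V^*(x) \;=\; -(\bar{d}+r), \qquad s\in F(x),
\]
independent of $s$ and $x$. Plugging this in, the argument of the maximum becomes $V^*(x)+\sum_{s\in F(x)} p_s(d)\,(d_s-\bar{d})$. Each summand is nonpositive since $p_s(d)\ge 0$ and $d_s\le\bar{d}$, while the supremum value $0$ is attained by choosing $d_s=\bar{d}$ for every $s\in F(x)$. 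For $s\notin F(x)$ the maximizer must take $d_s=\infty$, and the corresponding term drops out using the assumption that $\Pi_s(d)\to 0$ faster than linearly as $d_s\to\infty$. This yields $\mathcal{T}V^*(x)=V^*(x)$ for all $x\in X$.

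For uniqueness, my plan is to appeal to a DP argument tailored to the structure at hand. A first natural step is to show that $V^*$ is an upper envelope: for any fixed point $V$, the quantity $V(x)$ cannot exceed the best-case profit of selling every remaining slot at the ceiling price $\bar{d}$ while paying the terminal delivery cost $C(\bar{x})$, which is exactly $V^*(x)$; this direction should follow from the optimality condition $0=\max_d\sum_s p_s(d)[r+d_s+V(x+1_s)-V(x)]$ implied by $V=\mathcal{T}V$, by inserting the specific choice $d_s=\bar{d}$. For the reverse inequality, the plan is to produce a policy whose expected return approaches $V^*(x)$ in the infinite-horizon limit (for example, the stationary policy $d\equiv\bar{d}$), using Assumption \ref{ass:profit_pos} to guarantee that the marginal profit of every additional feasible order is nonnegative so that ``filling up to $\bar{x}$'' is asymptotically optimal. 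Sandwiching then forces $V=V^*$.

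The main obstacle will be the uniqueness direction, because the Bellman operator is undiscounted: the transition probabilities $p_s(d)$ together with the self-loop mass $1-\sum_s p_s(d)$ add to one, so $\mathcal{T}$ is only a sup-norm non-expansion rather than a strict contraction, and standard Banach-type arguments do not apply off the shelf. The remedy I anticipate is either (i) a weighted norm adapted to the distance to the absorbing boundary $\bar{x}$, exploiting that under any policy that offers at least one finite price the chain drifts toward $\bar{x}$ at a rate bounded away from zero, or (ii) the monotone sandwich described above, with Assumption \ref{ass:profit_pos} used essentially to rule out degenerate ``never offer any slot'' fixed points whose values would otherwise be unconstrained at $x=\bar{x}$.
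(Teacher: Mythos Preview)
Your verification step coincides with the paper's: substitute the candidate, collapse every forward difference to $-(\bar d+r)$, and observe that $\max_{d\in D}\sum_s p_s(d)(d_s-\bar d)=0$ with the maximiser $d_s=\bar d$. The paper adds only the anchoring remark $V^*(\bar x)=-C(\bar x)$, which you also flag.

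For uniqueness the paper does something different from both of your options: it does not argue directly at all. It observes that the DP can be recast as a stochastic shortest path problem (citing a companion paper) and then invokes the standard fact that the SSP Bellman operator is a contraction in a suitable weighted sup norm (citing Bertsekas). Your option~(i), a weighted norm exploiting drift toward the absorbing boundary $\bar x$, is exactly the content of those citations, so pursuing it would reproduce what the paper imports rather than offer a new route.

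Your option~(ii), the sandwich, is a genuinely more elementary alternative, but the sketch has the inequalities pointing the wrong way. Plugging the feasible choice $d_s=\bar d$ into the identity $0=\max_d\{\cdot\}$ gives
\[
\sum_{s\in F(x)}p_s(\bar d\mathbf 1)\bigl[(\bar d+r)-\bigl(V(x)-V(x+1_s)\bigr)\bigr]\le 0,
\]
which, telescoped toward $\bar x$ with the anchoring $V(\bar x)=-C(\bar x)$, yields $V(x)\ge V^*(x)$, not $V(x)\le V^*(x)$. Your policy-value argument for $d\equiv\bar d$ likewise gives only a lower bound on a fixed point (via $V=\mathcal T V\ge\mathcal T^{\bar d}V$ iterated), so both halves of your sandwich currently point the same way. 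Closing the other direction, $V\le V^*$, is where the real work lies: it must rule out fixed points with opportunity costs exceeding $\bar d+r$ at some slot while being compensated by smaller opportunity costs at others, and this is precisely where the weighted-norm contraction (equivalently, the SSP machinery in the paper's citations) earns its keep. If you want to avoid that machinery altogether, you will need a sharper argument than either half of your sketch provides.
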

There is a natural interpretation of this perhaps surprisingly compact result: The fixed point of the DP is a hyperplane in $x$, where each element of the gradient of $V^*$ is equal to $-(\bar{d}+r)$, or equivalently, the opportunity cost of an order is $V_t(x)-V_t(x+1_{s})=\bar{d}+r$ for all $(x,s) \in X \times F(x)$. Therefore, the only optimal selection of delivery slot prices is to choose $\bar{d}$ for all delivery slots $s \in F(x)$. For any other choice the opportunity costs would be larger than the revenue generated by any order. This result makes intuitive sense as in the limit as $t \to -\infty$, there will always arrive enough customers who will be willing to pay $\bar{d}$ for a delivery. Therefore, in the infinite time horizon case, it is best to always charge the maximum admissible delivery charge.
\subsection{Finite Time Horizon Result}\label{subsec:fin_res}
For finite $\bar{t}$, we establish a geometric property of the value function $V_t$, $t\in T$, related to concavity of a continuous function. As the domain of $V_t$ is discrete, it is not possible to establish this property from convexity theory. We provide some definitions before stating our main result. 

Let the opportunity cost of an order in slot $s$ at time $t$ be denoted by $\gamma_{s,t}(x):=V_t(x)-V_t(x+1_{s}) \geq 0$ for all $(x,s,t)\in X\times S\times T$. Let $\gamma_t(x):=[\gamma_{1,t}(x),\gamma_{2,t}(x),\dots,\gamma_{\bar{s},t}(x)]$. We define the set of stochastic vectors in $X$ as 
	\begin{equation}
	\mathcal{V}_X:=\set{v\in \Re_{+}^N}{\sum_{i= 1}^{\dim(X)}v_i=1},
	\end{equation}
where $v_i$ denotes the $i$-th component of $v$. Let $x\in X$ and let $Q\subseteq \mathbb{Z}^{\bar{s}}$ be a finite set. Then $Q$ is defined to be an \emph{enclosing set} of $x$ if $x \in$ conv$(Q)$. We define $\mathcal{Q}(x)$ as the set of all sets $Q$ enclosing $x$. The following two definitions are frequently used in discrete convex analysis:
\begin{definition}[cf.\ {\citet[(2.1)]{MUROTA2001}}]\label{de:conc_clo}
Let $a \in \Re^N$ and $b \in \Re$. Then the concave closure $\tilde{f}: \Re^N \to \Re \cup \{-\infty\}$ of a function $f:\mathbb{Z}^N \to \Re \cup \{-\infty\}$ is defined as
\begin{equation}\label{eq:concclo}
\tilde{f}(x):=\underset{a,b}{\inf}\set{a^\intercal x + b}{a^\intercal y + b \geq f(y) \hspace{4mm} \forall y \in \mathbb{Z}^N}.
\end{equation}
\end{definition}
\begin{definition}[{cf.\ \citet[Lemma 2.3]{MUROTA2001} and \cite[Proposition 2.31]{ROCKAFELLAR98}}]\label{de:conc_ext}
A function $f: \mathbb{Z}^N \to \Re \cup \{-\infty\}$ is concave-extensible if and only if any of the following equivalent conditions hold:
\begin{itemize}
\item[(a)] The evaluations of $f$ coincide with the evaluations of its concave closure $\tilde{f}$, i.e.\ $f(x)=\tilde{f}(x)$ for all $x\in \mathbb{Z}^N$.
\item[(b)] For all $x\in X$ and for all $Q\in \mathcal{Q}(x)$, the evaluation of $f$ at $x$ does not lie below any possible linear interpolation of $f$ on the points $q\in Q$, i.e.\ for all $x\in X$, for all $Q\in \mathcal{Q}(x)$ and for all $\mu \in \mathcal{V}_X$, such that $ x = \sum_{q\in Q}\mu_q q$, it holds that
\begin{equation} \label{eq:conc_ext}
f(x) \geq \sum_{q\in Q} \mu_q f(q).
\end{equation}
\end{itemize}
\end{definition}
Based on these definitions, we impose the following assumptions on our finite time horizon result.
\begin{assumption}\label{as:concavecost}
	We assume that the opportunity cost at the terminal condition $\gamma_{s,\bar
{t}+1}$ of all orders is increasing in $x$ for all unit hypercubes in $X$, i.e.\ $\gamma_{s,\bar{t}+1}(x) < \gamma_{s,\bar{t}+1}(x+1_{s'})$ for all $(x,s,s')\in X\times F(x)\times F(x)$, such that $s\neq s'$.
\end{assumption}
\begin{assumption}\label{ass:c}
	The function $-C$ is concave-extensible.
\end{assumption}
Assumption \ref{as:concavecost} is satisfied if $V_{\bar{t}+1}$ is strictly submodular. This is since for all strictly submodular functions $f: \mathbb{Z}^n \to \Re$ we have
\begin{equation}
f(\max(y,z))+f(\min(y,z)) < f(y)+f(z)
\end{equation}
for all $y$ and $z \in$ dom$(f)$, where the maximum and minimum are taken componentwise (e.g.\ see \citet[Definition 3.2]{BERTSIMAS2005}). This means that $f$ has increasing opportunity costs, since, for all $(x,s,s')\in X\times S\times S$, such that $s\neq s'$, we can set $f = V_{\bar{t}+1}$, $y=x+1_s$, $z=x+1_{s'}$, which yields the desired inequality:
\begin{equation}
\begin{split}
V_{\bar{t}+1}(x+1_s+1_{s'})+V_{\bar{t}+1}(x) &< V_{\bar{t}+1}(x+1_s) + V_{\bar{t}+1}(x+1_{s'}) \\
\iff V_{\bar{t}+1}(x)-V_{\bar{t}+1}(x+1_s) &< V_{\bar{t}+1}(x+1_{s'}) -V_{\bar{t}+1}(x+1_s+1_{s'}) \\
\iff \gamma_{s,\bar{t}+1}(x) &< \gamma_{s,\bar{t}+1}(x+1_{s'}).
\end{split}
\end{equation}
Since $V_{\bar{t}+1}$ needs to be strictly submodular, this requires that $C$ is strictly supermodular as $V_{\bar{t}+1}(x)=-C(x)$ for all $x\in X$. This is not the case for all $C$ used in the literature. For example, \citet{YANG2017} use an affine cost function. However, our results are also relevant for situations with affine cost functions, since -- as we show numerically in Section \ref{sec:ex} -- the value function can reach a state where Assumption \ref{as:concavecost} is satisfied in a small number of iterations of the Bellman operator. Assumption \ref{ass:c} is weak as it is satisfied by any convex cost function, which also includes the aforementioned affine cost functions. We can now state our second main result.
\begin{theorem}\label{th:v_conc}	
	Under Assumptions \ref{ass:profit_pos}, \ref{as:concavecost} and \ref{ass:c}, there exists a sufficiently small $\lambda > 0$, such that $V_t$ is finite-valued, concave-extensible in $x$ for all $t\in T$.
\end{theorem}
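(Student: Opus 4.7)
The plan is to proceed by backward induction on $t \in T$, starting from the terminal condition $V_{\bar{t}+1} = -C$ and propagating two properties together: finite-valuedness on $X$ and concave-extensibility. Since $X$ is finite and $C$ is finite-valued on $X$, the base case for finite-valuedness is immediate, and concave-extensibility at the terminal step is precisely Assumption \ref{ass:c}. Moreover, Assumption \ref{as:concavecost}, via its strict-submodularity interpretation discussed after its statement, endows $V_{\bar{t}+1}$ with a \emph{strict} version of inequality (b) in Definition \ref{de:conc_ext} on every non-degenerate configuration $(x,Q,\mu)$. Because $X$ is finite, this strictness can be quantified as a uniform positive margin $\delta_{\bar{t}+1} > 0$, which I plan to carry along as a third inductive invariant throughout the backward recursion.

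For the inductive step, finite-valuedness is the easy part: restricting the maximization to $d \in D$ (the $d_s = \infty$ convention only amounts to dropping slot $s$), the probabilities $p_s(d)$, the charges $d_s$, and $V_{t+1}$ on the finite set $X$ are all bounded, which jointly yields boundedness of $V_t$ on $X$. Concave-extensibility will be checked via condition (b) of Definition \ref{de:conc_ext}. Rewriting \eqref{eq:rm} as
\[
V_t(x) = V_{t+1}(x) + \lambda \max_{d \in D} \sum_s \Pi_s(d)\bigl[r + d_s - \gamma_{s,t+1}(x)\bigr],
\]
the target inequality $V_t(x) \geq \sum_{q \in Q}\mu_q V_t(q)$ decomposes into the corresponding inequality for $V_{t+1}$ (non-negative by the inductive hypothesis, and at least $\delta_{t+1}$ on non-degenerate configurations) plus a $\lambda$-weighted correction term involving the maximized expression, which need not be sign-definite but is bounded in absolute value by a constant $M_{t+1}$ depending only on $D$, $r$ and the current bound on $V_{t+1}$.

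The main obstacle — and the reason the theorem assumes $\lambda$ sufficiently small — is precisely this sign-indeterminate perturbation: concave-extensibility of $V_{t+1}$ alone is \emph{not} preserved by $\mathcal{T}$, so the argument must carry a quantitative slack that the perturbation cannot exceed. My plan is to show that if $V_{t+1}$ is concave-extensible with uniform strict margin $\delta_{t+1} > 0$, then $V_t$ inherits a margin $\delta_t \geq \delta_{t+1} - \lambda M_{t+1}$, which remains positive as long as $\lambda M_{t+1} < \delta_{t+1}$. Because the horizon $\bar{t}$ is finite and $M_{t+1}$ can be controlled uniformly in $t$ through a priori bounds on $V_t$, one can choose a single $\lambda > 0$ small enough that the margins stay positive across the recursion, simultaneously delivering concave-extensibility of every $V_t$ and the strict opportunity-cost property needed to iterate the induction. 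The most delicate piece — and the step I would handle with greatest care — is verifying that the strict submodularity of $V_{\bar{t}+1}$ genuinely lifts to a uniform strictness in Definition \ref{de:conc_ext}(b) over all non-degenerate $(x,Q,\mu)$, and that the per-step margin loss $\lambda M_{t+1}$ is indeed $O(\lambda)$ uniformly in $t$ so that a common $\lambda$ suffices.
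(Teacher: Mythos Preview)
Your perturbation-with-margin strategy has a genuine gap at exactly the step you flag as most delicate. Strict submodularity of $V_{\bar t+1}$ (which is all Assumption~\ref{as:concavecost} gives) does \emph{not} imply a strictly positive margin in Definition~\ref{de:conc_ext}(b) on every non-degenerate $(x,Q,\mu)$. A concrete obstruction: take $\bar s=2$ and $-C(x_1,x_2)=-x_1x_2$ on $X$. This is strictly submodular (the mixed second difference equals $-1$) and concave-extensible, yet along the first coordinate axis $(-C)(0,0)=(-C)(1,0)=(-C)(2,0)=0$, so for $x=(1,0)$, $Q=\{(0,0),(2,0)\}$, $\mu=(\tfrac12,\tfrac12)$ the margin is exactly zero. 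More generally, Assumption~\ref{as:concavecost} constrains only cross-coordinate differences $\gamma_{s}(x)$ versus $\gamma_{s}(x+1_{s'})$ with $s\neq s'$ and says nothing about concavity along a single axis; combined with Assumption~\ref{ass:c} (mere concave-extensibility), $V_{\bar t+1}$ can legitimately be affine in some directions. On such flat configurations your inequality reduces to requiring the perturbation term $\Delta(x):=\max_{d\in D}\sum_s\Pi_s(d)[r+d_s-\gamma_{s,t+1}(x)]$ itself to satisfy the concave-extensibility inequality, and there is no reason for that: $\Delta$ is a pointwise maximum of affine functions of $\gamma_{t+1}(x)$, hence convex (not concave) in those differences. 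Excluding degenerate $\mu$ does not save the argument either, since for fixed $(x,Q)$ the margin is linear in $\mu$ and can be driven to its infimum (possibly zero) along the boundary of the feasible polytope; a uniform $\delta>0$ over ``non-degenerate'' $\mu$ simply need not exist.

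The paper's proof works by a different mechanism that does not rely on any quantitative slack in the concavity of $V_{t+1}$. It changes variables from $d$ to $p$ and writes $\mathcal{T}V_t(x)=\max_{p\in P}\{f(p)+g_t(x,p)\}$, then shows (Lemma~\ref{le:conc}) that $f$ is concave in $p$ and that, for small $\lambda$, $g_t$ is \emph{jointly} concave-extensible in $(x,p)$; concavity of $\mathcal{T}V_t$ then follows from the standard fact that partial maximisation of a jointly concave function preserves concavity. The role of small $\lambda$ there is not to dominate a sign-indeterminate remainder, but to propagate the increasing-opportunity-cost structure (Proposition~\ref{pr:oppcost}), which in turn forces the concave closure $\tilde V_t$ to be a single hyperplane on each set $A(y)=\{y\}\cup\{y+1_s\}_{s\in F(y)}$; this structural fact is what makes $g_t$ jointly concave-extensible. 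Your $O(\lambda)$-perturbation idea would bypass the MNL-specific machinery if it worked, but as written it fails precisely where the terminal margin vanishes, and closing that hole essentially forces one back to a structural argument of the paper's type.
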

In the following section, we prove our main two results. Furthermore, we quantify a range of values for $\lambda$ such that Theorem \ref{th:v_conc} always holds. This condition is reported in \ref{ap:uncon}.
\section{Proofs of Main Results}\label{sec:proofs}
\subsection{Proof of Infinite Time Horizon Theorem}\label{sec:inf_proof}
To prove Theorem \ref{th:fixedpoint}, we first note that the DP in \eqref{eq:abstract_operator} can be reformulated as a so-called stochastic shortest path problem \citep[see][Section 4.1.1]{LEBEDEVETAL2019A}. The Bellman operator mapping of this class of problems is known to be contractive (see \citet[Chapters 1 and 3]{BERTSEKAS2012} and \citet[Lemma 5]{LEBEDEVETAL2019A}). Therefore, the DP in \eqref{eq:abstract_operator} admits a unique fixed point. We start with the necessary and sufficient condition for $\mathcal{T}$ to have a fixed point $V^{*}$, which is $V^{*}=\mathcal{T}V^{*}$. Setting $V_t(x)=V_{t+1}(x)=V^*(x)$ in \eqref{eq:rm} yields
\ifTwoColumn
\begin{equation}\label{eq:dp_fixedpoint_cond}
\begin{split}
0&=\underset{d}{\max}\left\{\sum_{s}p_{s}(d)[r+d_{s}\right.\\
&\phantom{=\;}\left.+V^*(x+1_{s})-V^{*}(x)]\vphantom{\sum_a}\right\}.
\end{split}
\end{equation}
\else
\begin{equation}\label{eq:dp_fixedpoint_cond}
\underset{d\in D}{\max}\left\{\sum_{s}p_{s}(d)[r+d_{s}+V^{*}(x+1_{s})-V^{*}(x)]\right\}= 0.
\end{equation}
\fi
Substituting the candidate $V^*$ from \eqref{eq:dp_fixedpoint} into \eqref{eq:dp_fixedpoint_cond} results then in
\begin{equation}
\label{dp:fixedpoint_finalcond} \underset{d\in D}{\max} \left\{\sum_{s}p_{s}(d)[d_{s} -\bar{d}\,]\right\} = 0.
\end{equation}
The values of all $p_{s}(d)$ are non-negative for all \mbox{$s\in S$} and for all $d \in D$. The value of $[d_{s}-\bar{d}\,]$ is non-positive and $0$ only if $d_{s}=\bar{d}$ for all $s \in S$. It follows that the maximum non-negatively weighted sum of the $[d_{s}-\bar{d}]$ terms is $0$, so \eqref{dp:fixedpoint_finalcond} holds. Finally, notice that $V_t(\bar{x)})=C(\bar{x})$ for all $t\in T$. Since the candidate fixed points satisfies \mbox{$V^{*}(\bar{x})=V_{\bar{t}+1}(\bar{x})=-C(\bar{x})$}, $V^{*}$ is a fixed point of $\mathcal{T}$ for all $x \in X$. \qed
\subsection{Proof of Finite Time Horizon Theorem}\label{sec:fin_proof}
In this section, we prove Theorem \ref{th:v_conc}. We start by reformulating \eqref{eq:rm} as a maximisation over $p \in P$ instead of $d \in D$. As shown by \citet{DONGETAL2009}, this is possible, since, for all $s\in S$, the following unique mapping between $p$ and $d$ exists:
\begin{equation}
\frac{p_{s}}{p_{0}}=\exp(\beta_c+\beta_s+\beta_d d_{s}),
\end{equation}
where we recall that $p_0=\lambda \Pi_0>0$. We solve this equation with respect to $d_{s}$ to obtain
\begin{equation}\label{eq:d_ito_p}
d_{s}=\beta_d^{-1}\left[\ln\left(\frac{p_{s}}{p_{0}}\right)-\beta_c-\beta_s\right].
\end{equation}
We will prove the theorem by induction. To this end, we fix an arbitrary $t \in T$, assume for an induction hypothesis that $V_t$ is concave-extensible in $x$ and now show that $V_{t-1}=\mathcal{T}V_t$ is also concave-extensible. Note that the base case in our induction proof is captured by Assumption \ref{ass:c}. By substituting \eqref{eq:d_ito_p} into \eqref{eq:rm} we obtain
\begin{equation}\label{eq:fandg}
\begin{split}
&\begin{split}
\mathcal{T}V_t(x)&= \underset{p\in P}{\max}\sum_{{s}}p_{s} \left\{r+\beta_d^{-1}\left[\ln\left(\frac{p_{s}}{p_{0}}\right)-\beta_c-\beta_s\right]\right. \\
&\phantom{=\;}+V_t(x+1_{{s}})-V_t(x)\vphantom{\sum_a}\left.\vphantom{\frac{a}{a}}\right\}+V_t(x)
\end{split}\\
&\phantom{\mathcal{T}V_t(x)}=\underset{p\in P}{\max} \{f(p)+g_t(x,p)\},
\end{split}
\end{equation}
where we have defined
\begin{equation}\label{eq:f_def}
\begin{split}
f(p):=& \sum_{{s}}p_{s}\left\{ r+\beta_d^{-1}\left[\ln\left(\frac{p_{s}}{p_{0}}\right)-\beta_c-\beta_{s}\right]\right\},\\
g_t(x,p):=& \sum_{s}p_{s}\left\{V_t(x+1_{{s}})-V_t(x)\right\}+V_t(x)
\end{split}
\end{equation}
for all $(x,p) \in X\times P$. This allows us to formulate the following result, whose parts we prove in Appendices A and B, respectively. Recall that $p_{s} =\lambda\Pi_{s}$ for all $s \in S$.
\begin{lemma}\label{le:conc}
For all $t\in T$, the functions $f$ and $g_t$ have the following properties:
\begin{enumerate}[(i)]
\item The function $f$ is concave in $p$.
\item Under Assumption \ref{as:concavecost} and if $V_t$ is concave-extensible in $x$, there exists a sufficiently small $\lambda > 0$ such that the function $g_t$ is concave-extensible in $(x,p)$.
\end{enumerate}
\end{lemma}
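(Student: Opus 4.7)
The plan is to prove the two parts separately, since they concern different variables and call on different techniques.

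For part (i), I would split $f$ into an affine-in-$p$ contribution and the nonlinear term $\beta_d^{-1}\sum_s p_s \ln(p_s/p_0)$. Since $p_0 = \lambda - \sum_s p_s$, the nonlinear piece equals $\beta_d^{-1}\bigl[\sum_s p_s \ln p_s - (\sum_s p_s)\ln(\lambda - \sum_s p_s)\bigr]$. Each $p_s \ln p_s$ is strictly convex in $p_s$ (second derivative $1/p_s$), and setting $u := \sum_s p_s$, a direct second-derivative computation shows that $-u\ln(\lambda - u)$ is convex on $[0,\lambda)$ and therefore convex in $p$. Summing yields convexity of the bracketed expression, and multiplying by $\beta_d^{-1}<0$ flips the sign to concavity; the affine part then preserves concavity of $f$. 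An equivalent clean route is to write out $\nabla_p^2 f$ explicitly and recognize it as $\beta_d^{-1}$ times a diagonal-plus-rank-one positive definite matrix.

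For part (ii), I would invoke the induction hypothesis (with base case supplied by Assumption \ref{ass:c}) to obtain a concave closure $\tilde V_t : \Re^{\bar{s}} \to \Re$ of $V_t$, and propose the candidate extension $\tilde g_t(x,p) := \tilde V_t(x+p)$ on $\Re^{\bar{s}} \times \Re^{\bar{s}}$. Joint concavity of $\tilde g_t$ in $(x,p)$ is then immediate, since $\tilde V_t$ is concave and $(x,p)\mapsto x+p$ is affine. The substantive task is to verify that $\tilde g_t(x,p) = g_t(x,p)$ for $x \in X$ and $p \in P$, which — using the identity $x + p = (1 - \sum_s p_s)\,x + \sum_s p_s (x + 1_s)$, valid because $\lambda<1$ keeps $\sum_s p_s <1$ — reduces to showing that $\tilde V_t$ is affine on each ``base simplex'' with vertices $\{x,\, x+1_1,\,\ldots,\,x+1_{\bar{s}}\}$.

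The hard part is precisely this last affineness, and it is where both Assumption \ref{as:concavecost} and the smallness of $\lambda$ intervene. Already for $\bar{s}=2$, requiring the affine interpolation through $\{x,\,x+1_1,\,x+1_2\}$ to dominate the rival interpolation through $\{x,\,x+1_1+1_2\}$ reduces to $V_t(x+1_1)+V_t(x+1_2) \geq V_t(x)+V_t(x+1_1+1_2)$, i.e.\ the increasing-opportunity-cost condition of Assumption \ref{as:concavecost} stated at the terminal time. The plan is to propagate this property from $t = \bar{t}+1$ forward through the Bellman recursion: the bound on $\lambda$ appears because the probability weights $p_s$ must be kept small enough that the convex combinations performed by $\mathcal T$ do not destroy the increasing-opportunity-cost structure. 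Carrying out this inductive propagation, and then combining with part (i) so that partial maximisation of the jointly concave $f + \tilde g_t$ over $p$ yields a concave extension of $V_{t-1} = \mathcal T V_t$ in $x$, is the technically most demanding component and the main obstacle I would expect to grapple with.
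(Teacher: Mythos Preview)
Your plan is sound and aligns closely with the paper's own argument, with some stylistic differences worth noting.

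For part (i), the paper treats $p_0$ as a free coordinate, computes the full Hessian of $f$ in $(p_1,\dots,p_{\bar s},p_0)$, and shows it is negative semidefinite via the Schur complement of the diagonal block. Your route---substituting $p_0=\lambda-\sum_s p_s$ and splitting $f$ into an affine part plus $\beta_d^{-1}$ times a sum of negative-entropy and $-u\ln(\lambda-u)$ terms---is a genuinely different and somewhat more elementary decomposition; it avoids the Schur-complement machinery at the cost of the small computation you outline. Either approach delivers the result.

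For part (ii), your proposal and the paper converge on the same core mechanism. The paper also reduces concave-extensibility of $g_t$ to showing that the concave closure $\tilde V_t$ is \emph{affine} on each simplex $A(x)=\{x\}\cup\{x+1_s\}_{s\in F(x)}$, and then checks Definition~\ref{de:conc_ext}(b) directly against arbitrary enclosing sets; your packaging via $\tilde g_t(x,p):=\tilde V_t(x+p)$ is a clean shortcut to the same conclusion. The substantive work---which you correctly flag as the obstacle---is twofold: (a) proving that increasing opportunity costs force the concave closure on each unit hypercube $B(x)$ to be supported only by the vertices in $A(x)$ (the paper handles general $\bar s$ by an induction on $|F(x)|$, not just the $\bar s=2$ case you sketch), and (b) propagating the increasing-opportunity-cost property backward in $t$, which the paper carries out by rewriting $\mathcal{T}V_t$ explicitly through the Lambert~$W$ function and deriving the quantitative bound on $\lambda$ (this is their Proposition~\ref{pr:oppcost}). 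Your proposal anticipates both steps; just be aware that (a) in dimension $\bar s>2$ requires more than the pairwise submodularity inequality, and (b) is where the actual $\lambda$-threshold is produced rather than merely invoked.
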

The proof of Lemma \ref{le:conc}(i) is given in \ref{ap:concproof1}. In Lemma\ref{le:conc}(ii), we assume that $V_t$ is concave-extensible in $x$ as this is embedded within our induction proof for Theorem $\ref{th:v_conc}$, where this corresponds to our induction hypothesis. The proof of Lemma \ref{le:conc}(ii) depends on the following self-contained result. Let us consider a relaxation of the constraint on the optimisation variable $d$ in \eqref{eq:rm} and optimise over $\Re^{\bar{s}}$ instead of $D$. We refer to this problem as the \textit{unconstrained} DP as opposed to the original, \textit{constrained} DP.
\begin{proposition}\label{pr:oppcost} The constrained and unconstrained version of the DP share the following property:
\begin{enumerate}[(i)]
\item	Consider the \textit{unconstrained} DP. Under Assumption \ref{as:concavecost}, the opportunity cost $\gamma_{s,t}$ of all orders is increasing for all unit hypercubes in $X$, i.e.\
	\begin{equation}\label{eq:gammainc}
	\gamma_{s,t}(x+1_{s'}) > \gamma_{s,t}(x)
	\end{equation}
	for all $(x,s,s',t)\in X\times F(x)\times F(x) \times T$, such that $s\neq s'$.
\item Property \eqref{eq:gammainc} also holds for the \textit{constrained} DP. 
\end{enumerate}
\end{proposition}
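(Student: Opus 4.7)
The plan is to prove both parts by backward induction on $t$, with the base case at $t=\bar{t}+1$ handled directly by Assumption \ref{as:concavecost}, and the inductive step propagating the strict-submodularity (``increasing opportunity cost'') property backward through the Bellman operator $\mathcal{T}$.

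For part (i), the unconstrained DP, I would leverage the reformulation in \eqref{eq:fandg}: $\mathcal{T}V_t(x)$ depends on $x$ only through $V_t(x)$ itself and the increments $V_t(x+1_s)-V_t(x)=-\gamma_{s,t+1}(x)$, so one can write
\begin{equation*}
V_{t-1}(x) - V_t(x) \;=\; R\bigl(\gamma_{t+1}(x)\bigr)
\end{equation*}
for a function $R$ of the opportunity-cost vector alone. Applying the first-order conditions of the MNL model in the absence of box constraints yields a closed-form optimum in which $d_s^{*}-\gamma_{s,t+1}(x)=\theta^{*}$ is a single scalar common to all slots $s$, determined implicitly by the no-purchase probability $p_0^{*}$. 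By the envelope theorem, $\partial R/\partial\gamma_s=-p_s^{*}$, so $R$ is convex and coordinate-wise non-increasing. The desired inequality then decomposes as
\begin{equation*}
\gamma_{s,t-1}(x+1_{s'}) - \gamma_{s,t-1}(x) \;=\; \bigl[\gamma_{s,t}(x+1_{s'}) - \gamma_{s,t}(x)\bigr] + \Delta_{s,s'}R,
\end{equation*}
where $\Delta_{s,s'}R := R(\gamma_t(x+1_{s'})) - R(\gamma_t(x+1_{s'}+1_s)) - R(\gamma_t(x)) + R(\gamma_t(x+1_s))$ is a mixed second difference of $R$. The first bracket is strictly positive by the inductive hypothesis; signing $\Delta_{s,s'}R$ requires a mean-value/envelope calculation that relates the difference to the monotonicity of the optimal probabilities $p^{*}$ in $\gamma$.

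For part (ii), the constrained DP, I would treat the constrained maximiser as the projection of the unconstrained one onto the box $D$. At slots with inactive constraints the argument of part (i) transfers verbatim; at slots where a constraint is active, $d_s^{*}\in\{\underline{d},\bar{d}\}$ does not respond to marginal changes in $\gamma_{s,t+1}(x)$, which cannot reverse the sign of the opportunity-cost difference. Combining both cases preserves the strict inequality.

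The main obstacle will be signing the mixed second difference $\Delta_{s,s'}R$: pointwise convexity of $R$ alone does not immediately suffice, since the increments occur along different coordinate directions. The resolution requires exploiting that the inductive hypothesis applies coordinate-wise to every slot (not only $s$), so the vectors $\gamma_t(x+1_{s'}+1_s)-\gamma_t(x+1_{s'})$ and $\gamma_t(x+1_s)-\gamma_t(x)$ are themselves comparable entrywise, allowing a careful convexity/envelope estimate to close the argument.
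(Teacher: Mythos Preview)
Your setup for part (i) is correct and matches the paper: the function you call $R$ is exactly the paper's $\phi(\gamma)=-\lambda\beta_d^{-1}W\!\big(\sum_s\psi_s(\gamma_s)\big)$, your envelope identity $\partial R/\partial\gamma_s=-p_s^{*}$ is right, and convexity of $R$ follows since it is a pointwise maximum of affine functions of $\gamma$.

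The genuine gap is in closing the inductive step. You correctly flag that convexity of $R$ does not by itself sign the mixed second difference $\Delta_{s,s'}R$, but your proposed fix --- entrywise comparability of the two increment vectors together with a ``careful convexity/envelope estimate'' --- does not go through: a convex, coordinate-wise non-increasing function need not be submodular, and nothing in your outline forces $\Delta_{s,s'}R\ge 0$. In fact the paper does \emph{not} establish $\Delta_{s,s'}R\ge 0$ at all. Its argument is cruder: using monotonicity of $\phi$ together with the explicit comparison inequality of Lemma~\ref{le:phi}(ii), it obtains only
\[
\gamma_{s,t-1}(x+1_{s'})-\gamma_{s,t-1}(x)\;>\;\gamma_{s,t}(x+1_{s'})-\gamma_{s,t}(x)\;-\;\phi(\mathbf{0}),
\]
where $\phi(\mathbf{0})=-\lambda\beta_d^{-1}W\!\big(\sum_s\psi_s(0)\big)>0$ is proportional to $\lambda$. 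Iterating $\bar t$ times and then choosing $\lambda$ small enough that $\bar t\,\phi(\mathbf{0})$ is dominated by the strictly positive terminal gap guaranteed by Assumption~\ref{as:concavecost} is what delivers \eqref{eq:gammainc} for all $t\in T$; this is precisely the condition \eqref{eq:lambda*}. Your plan never invokes a smallness condition on $\lambda$, and without it the induction cannot be completed because the per-step contribution $\Delta_{s,s'}R$ may well be negative.

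For part (ii), the ``projection onto the box $D$'' picture is incorrect for the MNL model. The unconstrained optimal prices are coupled through the common scalar $h_t(x)$ (equivalently through $p_0^{*}$), so clipping one coordinate to $\underline d$ or $\bar d$ shifts the optimal prices at all other coordinates; the constrained maximiser is not the coordinate-wise projection of the unconstrained one, and your ``inactive-constraint slots transfer verbatim'' claim fails. The paper sidesteps this entirely: since the map $\gamma\mapsto d^{*}(\gamma)$ in \eqref{eq:d_star_ito_W} is a bijection, any constrained optimum $d^{*}\in D$ coincides with the unconstrained optimum for some surrogate opportunity-cost vector $\hat\gamma_t(x)\in\Gamma\subset\Re_+^{\bar s}$, after which the part-(i) argument --- which was established for arbitrary non-negative opportunity-cost vectors --- applies directly to $\hat\gamma$.
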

We prove the two parts of Proposition \ref{pr:oppcost} in \ref{ap:uncon} and \ref{ap:con}, respectively. These results are then used in \ref{ap:concproof} to prove Lemma \ref{le:conc}(ii). An interesting implication of Proposition \ref{pr:oppcost} is that increasing opportunity costs in $x$ imply that the optimal pricing strategy will be increasing in $x$. This is due to the unique optimal prices being increasing in opportunity costs \citep{DONGETAL2009}.

By Lemma \ref{le:conc}, there exists a sufficiently small $\lambda$ such that $g_t$ has a continuous extension $\tilde{g}_t$, which is jointly concave in $(x,p)$. By inspection, $f$ is only a function of the continuously-valued variable $p$. Therefore, $f(p)+\tilde{g}_t(x,p)$ is also jointly concave in $(x,p)$. We define $U(x):=\underset{p\in P}{\max} \, \{f(p)+\tilde{g}_t(x,p)\}$. By \citet[Proposition 2.22]{ROCKAFELLAR98} or \citet[Section 3.2.5]{BOYD2004}), partial maximisation with respect to some variables of a continuous multivariate function that is jointly concave in all its variables, preserves concavity in the resulting function. Therefore $U$ is a concave function of $x$.

It remains to show that $U(x)=\mathcal{T}V_t(x)$ for all gridpoints $x\in X$. Repeating the same calculation, now with the discrete $f(p)+g_t(x,p)$ in place of $f(p)+\tilde{g}_t(x,p)$, i.e.\ \mbox{$\mathcal{T}V_t(x)=\underset{p\in P}{\max} \, \{f(p)+g_t(x,p)\}$}, note that $\tilde{f}(p)+\tilde{g}_t(x,p)=f(x)+g_t(x,p)$ for all $x\in X$ by Definition \ref{de:conc_ext}(a). Therefore, $\mathcal{T}V_t(x)=U(x)$ for all $x\in X$. This shows that $U$ is a continuous extension of $\mathcal{T}V_t$, which is concave in $x$. Hence, $\mathcal{T}V_t$ is concave-extensible in $x$. This concludes our induction argument and shows that the value function $V_t$ is concave-extensible in $x$ for all $t\in T$. \qed
\section{Illustrative Example}\label{sec:ex}
We illustrate our findings using a simple numerical example of a 2-slot problem. The parameters are listed in Table \ref{tab:parameters} below.
\begin{table}[H]
	\caption{The parameters of the numerical example.}
	\centering
	\begin{tabular}{r|l}\label{tab:parameters}
		$\lambda$ & $0.5$ \\
		$\bar{t}$ & $200$ \\
		$\left[\underline{d},\bar{d}\,\right]$ & $[0,2]$ \\
		$r$ & 1 \\
		$S$ & $\{1,2\}$ \\
		$\bar{x}$ & $[4,4]^{\intercal}$ \\
		$[\beta_c,\beta_d,\beta_{1},\beta_{2}]$ & $[1,-1,1,-1]$ \\
		$C_{\Re_+}(x)$ & $2+x_1+2x_2$	
	\end{tabular}
\end{table}
Notice that $C$ violates Assumption \ref{as:concavecost}. However, as our numerical example shows, after a few iterations of the Bellman operator, the opportunity costs become strictly increasing by inspection, so that our results still hold. The parameters yield the terminal condition 
\begin{equation}
V_{\bar{t}+1}(x):=-C(x)=-2-x_1-2x_2.
\end{equation} 
From Theorem \ref{th:fixedpoint}, we obtain the fixed point
\begin{equation}
\begin{split}
V^*(x)&:=(\bar{d}+r)\mathbf{1}^\intercal (\bar{x}-x)-C(\bar{x})\\
&\phantom{:}=10-3(x_1+x_2)
\end{split}
\end{equation} 
for all $x\in X$. We define -- as a measure of discrete concavity -- the quantity
\begin{equation}
\epsilon_t:=\underset{x,Q\in \mathcal{Q}(x)}{\min}\left\{V_t(x)-\sum_{q\in Q} \mu_q V_t(q)\right\}
\end{equation}
for all $t \in T$, such that $\mu \in \mathcal{V}$ and $\sum_{q\in Q} \mu_q V_t(q)=V_t(x)$ for all $t\in T$. In comparison with \eqref{eq:conc_ext}, notice that $\epsilon_t \geq 0$ implies that $V_t$ is concave-extensible. We compute $\epsilon_t$ for all $t\in T$ by enumeration of all possible enclosing sets and plot the result in Fig.\ \ref{fig:numex}(a), from which we see that $\epsilon_t\geq 0$ for all $t\in T$.
\begin{figure}[H]
	\centering
	\begin{subfigure}[t]{0.48\textwidth}
		\centering
		\includegraphics[width=\columnwidth,trim=0cm -1.5cm 0cm 1cm,clip]{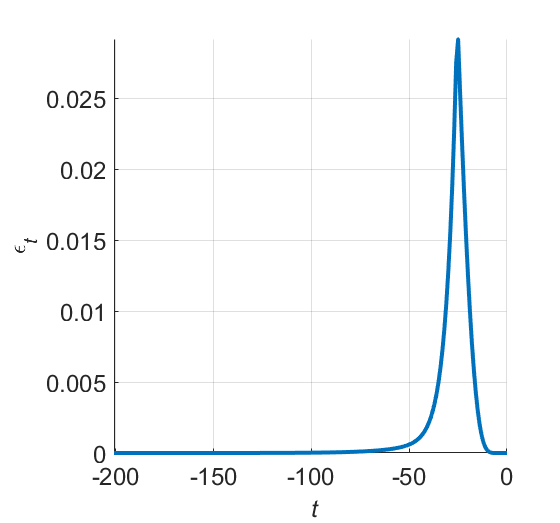}
		\caption{The concave extensibility measure $\epsilon(t)$ is non-negative for all time steps $t$ in the booking horizon.}
		\label{fig:gap}
	\end{subfigure}
	\hfill
	\begin{subfigure}[t]{0.45\textwidth}
	\centering
	\includegraphics[width=\columnwidth,trim=5cm 7.7cm 4.5cm 7.9cm,clip]{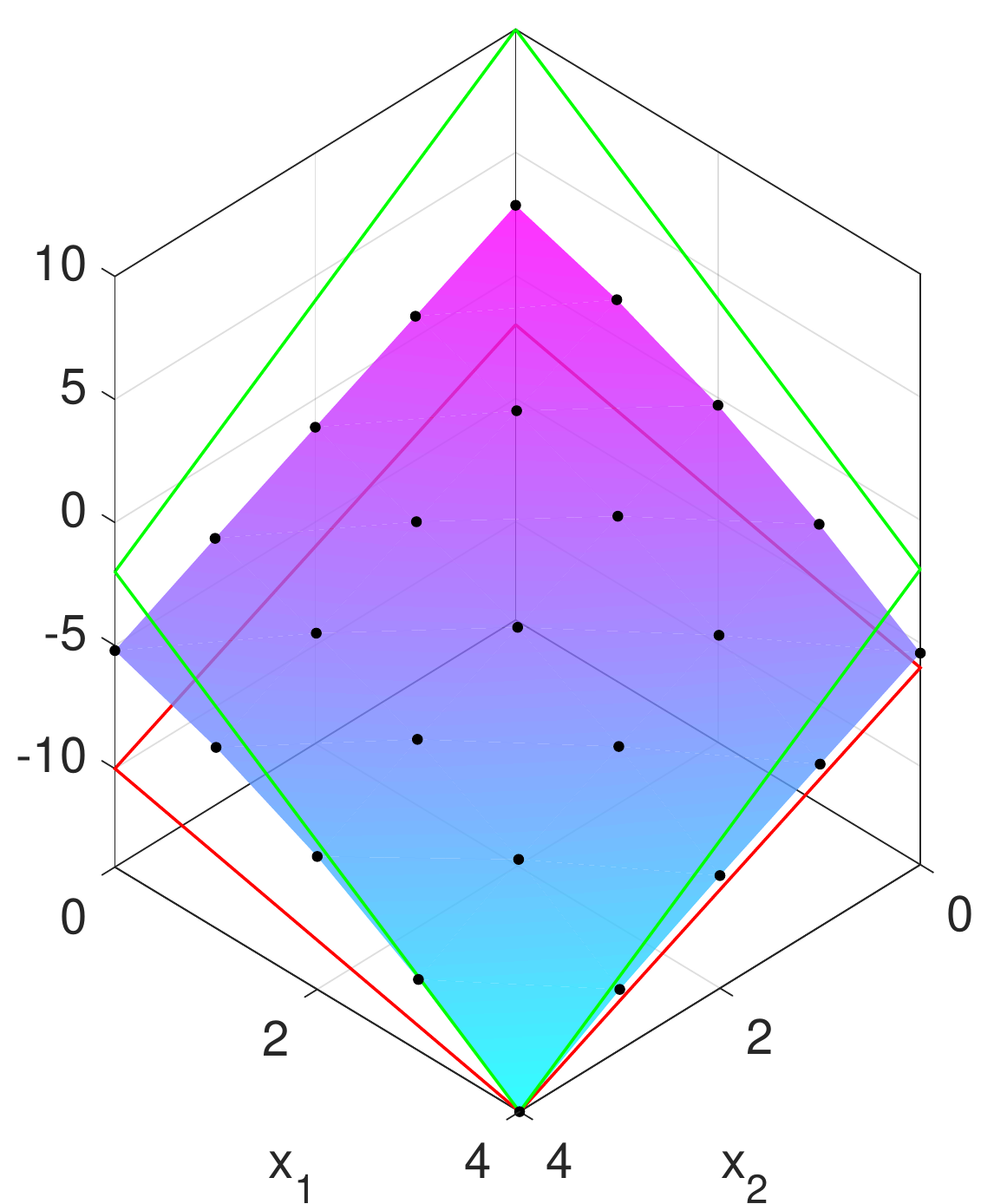}
	\caption{The value function at $t=\bar{t}+1$ (red), at the fixed point $t=-\infty$ (green) and at $t=\bar{t}-10$ (blue/violet).}
	\label{fig:value_func}
	\end{subfigure}
	\caption{Illustrative example of a 2-slot problem.}
	\label{fig:numex}
\end{figure}
Finally, we plot $V_t$ at the terminal condition $t=\bar{t}+1$, at the fixed point $t=-\infty$ and at $t=\bar{t}-10$ in Fig.\ \ref{fig:numex}(b). When it comes to approximating $V_t$, we can use the observation that $V_t$ always lies between the terminal condition and the fixed point to limit the range of basis function parameters, such that the approximated version of $V_t$ also falls between these lower and upper bounds. Also notice that the value function at $t=\bar{t}-10$ is concave-extensible and has increasing opportunity costs.
\section{Conclusions and Future Work}\label{sec:conc}
We have studied the mathematical properties of the value function of a dynamic program modelling the revenue management problem in attended home delivery exactly. We have shown that the recursive dynamic programming mapping has a unique, finite-valued fixed point and concavity-preserving properties. Hence, we have derived our main result stating that -- under certain assumptions -- for all time steps in the dynamic program, the value function admits a continuous extension, which is a finite-valued, concave function of its state variables. We have illustrated our findings on a simple numerical example. 

Recent approaches have estimated $V_t$ as an affine function of $x$ for each $t \in T$ \citep{YANG2017}. Based on our result, we believe that closer approximations can be found by pursuing different approximation strategies. One such strategy would be to adapt approximate DP algorithms from stochastic dual dynamic programming, also known as SDDP. The idea is to use a cutting plane algorithm to successively form tighter upper bounds to the value function described as the point-wise minimum of affine functions \citep{PEREIRA1991,SHAPIRO2011}.

A second possible direction of future research involves investigating the use of parametric models comprising concave basis functions. This idea can be exploited directly by using the given DP formulation -- as suggested in \citet[Section 8.2]{POWELL2007} -- or by reformulating the problem as a linear program -- as shown by \citet{DEFARIAS2003}. Note that \textit{a priori} knowledge of concave extensibility of $V_t$ for all $t\in T$ creates some intuitive regularity. Therefore, it can be expected to get good approximations of $V_t$ from a relatively small sample size even with simple models.

Another possible direction would be to adapt techniques that fit convex functions (or equivalently concave functions for our purposes) to multidimensional data. For example, \citet{KIM2004} and \citet{MAGNANI2009} show how data can be fitted by a function defined as the maximum of a finite number of affine functions. More sophisticated examples of convex (concave) function fitting techniques include adaptive partitioning \citep{HANNAH2013} and Bayesian non-parametric regression \citep{HANNAH2011}.

\section*{Acknowledgements}
This work was supported by SIA Food Union Management. The authors are grateful for this financial support.
\appendix

\section{Proof of Lemma \ref{le:conc}(i)}\label{ap:concproof1}
It is shown in \citet{DONGETAL2009} that a structurally similar function to $f$ is concave in its variables. We adopt a similar approach -- computing the Hessian and showing that it is negative definite -- to verify that $f$ is jointly concave in all components of the vector $p$. We first compute the first-order partial derivatives of $f$:
\begin{equation}
\begin{split}
\frac{\partial f}{\partial p_{i}}&=[r+\beta_d^{-1}(\ln(p_{i}/p_{0})-\beta_c-\beta_i)]+\beta_d^{-1}, \text{ for all } i\in S,\\
\frac{\partial f}{\partial p_{0}}&=\sum_{s} -p_{s}\beta_d^{-1}p_{0}^{-1}.
\end{split}
\end{equation}
The second-order partial derivatives are:
\begin{equation}
\begin{split}
\frac{\partial^2 f}{\partial p_{i}^2}&=\beta_d^{-1}p_{i}^{-1}, \text{ for all } i \in S,\\
\frac{\partial^2 f}{\partial p_{0}^2}&=\sum_{s}p_{s}\beta_d^{-1}p_{0}^{-2},\\
\frac{\partial^2 f}{\partial p_{i} \partial p_{0}}&=-\beta_d^{-1}p_{0}^{-1}, \text{ for all } i\neq 0,\\
\frac{\partial^2 f}{\partial p_{i} \partial p_{j}} &= 0, \text{ for all } (i,j)\in S\times S, \text{ such that } i\neq j.
\end{split}
\end{equation}
The resulting Hessian $H$ of $f$ with its second partial derivatives with respect to $\{p_{i}\}$ for all $i \in S\cup \{0\}$ is:
\begin{equation}
\begin{split}
H&:=\beta_d^{-1}\begin{bmatrix}
p_{1}^{-1} & \dots & 0 & -p_{0}^{-1} \\
\vdots & \ddots &\vdots & \vdots \\
0 & \dots & p_{\bar{s}}^{-1} & -p_{0}^{-1} \\
-p_{0}^{-1} & \dots & -p_{0}^{-1} & p_{0}^{-2}\sum_{s}p_{s}
\end{bmatrix} \\
&\hphantom{:}=:\begin{bmatrix}
A & B \\
B^\intercal & C
\end{bmatrix},
\end{split}
\end{equation}
where we have defined block sub-matrices $A,B,B^\intercal$ and $C$ of appropriate dimension, such that $C$ is a scalar corresponding to the last entry of $H$. Note that $A$ is negative definite, because $p_{s}\geq0$ for all $s\in S \cup \{0\}$ and $\beta_d^{-1}<0$. We compute the Schur complement of $A$ in $H$:
\begin{equation}
C-B^{\intercal}A^{-1}B=\beta_d^{-1}\left(\sum_{s}p_{s} p_{0}^{-2}-p_{0}^{-2}\sum_{s}p_{s}\right)=0.
\end{equation}
As a result of $\beta_d <0$, $A$ is negative definite and as $H/A$ is non-positive, $H$ is negative semi-definite (see e.g.\ \citet[Appendix A.5.5]{BOYD2004}). This implies that $f$ is concave in $p$.\qed

\section{Proof of Lemma \ref{le:conc}(ii)}\label{ap:concproof2}
The proof of Lemma \ref{le:conc}(ii) requires several intermediate results which we present in the following sections before returning to the proof of Lemma \ref{le:conc}(ii).
\subsection{Auxiliary function definitions and properties}
In this section, we define some auxiliary functions and establish some of their properties that are needed in the subsequent sections. We define $W: \Re_+ \mapsto \Re_+$ as the inverse function of $f: \Re_+ \mapsto \Re_+$, such that $f(x):=x\exp(x)$, i.e.\ implicitly defined through the relationship
\begin{equation}
x = W(x)\mathrm{e}^{W(x)}.
\end{equation}
Note that $W$, as defined above, is the principal branch of the so-called Lambert $W$ function, which is uniquely defined over the non-negative real numbers. To simplify notation in the following proof, we define two more functions: We define $\psi_s: \Re_+ \mapsto \Re_{++}$ as 
\begin{equation}\label{eq:psi}
\psi_s(z):= \exp(\beta_c+\beta_s+\beta_d(z-r)-1)
\end{equation}
for all $s\in S$. We define the function $\phi: \Re_{+}^{\bar{s}} \mapsto \Re_{++}$ as
\begin{equation}\label{eq:phi}
\phi(z):=-\lambda \beta_d^{-1}W\left(\sum_s\psi_s(z_s)\right),
\end{equation}
where $z_s$ indicates the $s$-th component of $z$. We can now establish some properties of $\phi$ that are instrumental for the subsequent proof of Lemma \ref{le:conc}(ii).

\begin{lemma}\label{le:phi}
The function $\phi$ has the following properties:
\begin{enumerate}[(i)]
\item It is decreasing over its domain.
\item It satisfies the inequality:
\begin{align}\label{eq:min_procedure}
&\phi(\gamma_t(x+1_{s'})) -\phi(\max\{\gamma_t(x+1_{s}),\gamma_t(x+1_{s'})\}) + \phi(\gamma_t(x+1_{s}))\nonumber \\
\geq& \phi(\min\{\gamma_t(x+1_{s}),\gamma_t(x+1_{s'})\})
\end{align}
for all $(x,s,s')\in X\times S\times S$, such that $s\neq s'$.
\end{enumerate}
\end{lemma}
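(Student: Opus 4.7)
The plan is to exploit the factorisation $\phi(z)=g(h(z))$, where $h(z):=\sum_s\psi_s(z_s)$ is coordinate-wise separable and $g(u):=-\lambda\beta_d^{-1}W(u)$ is a positive scalar multiple of the principal branch of the Lambert $W$ function. Each factor has tractable monotonicity and curvature properties that can be read off directly from the definitions and from standard facts about $W$.

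For part (i), the plan is to chase monotonicity through the composition. Since $\beta_d<0$, each $\psi_s(\cdot)=\exp(\beta_c+\beta_s+\beta_d(\cdot-r)-1)$ is strictly decreasing in its scalar argument, so $h$ is strictly decreasing in every component of $z$. The principal branch $W$ is strictly increasing on $[0,\infty)$ because $xe^x$ is strictly increasing on $[0,\infty)$, and the prefactor $-\lambda\beta_d^{-1}$ is strictly positive. Composing, $\phi=g\circ h$ is strictly decreasing in every component of $z$.

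For part (ii), with $y:=\gamma_t(x+1_s)$, $z:=\gamma_t(x+1_{s'})$ and componentwise $\min,\max$, I would first rewrite the target inequality in the submodular form $\phi(y)+\phi(z)\geq \phi(\min\{y,z\})+\phi(\max\{y,z\})$. The key observation is that $h$ is \emph{modular}: coordinatewise $\{\min(y_s,z_s),\max(y_s,z_s)\}=\{y_s,z_s\}$, so $h(\min\{y,z\})+h(\max\{y,z\})=h(y)+h(z)$. Introducing $u_\wedge:=h(\min\{y,z\})$, $u_\vee:=h(\max\{y,z\})$, $v_1:=h(y)$, $v_2:=h(z)$, this gives $u_\wedge+u_\vee=v_1+v_2$, and since $h$ is decreasing by part (i) while $\min\{y,z\}\leq y,z\leq \max\{y,z\}$ coordinatewise, $u_\vee\leq v_1,v_2\leq u_\wedge$. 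Next I would verify that $g$ is concave on $[0,\infty)$: implicit differentiation of $u=W(u)e^{W(u)}$ yields $W'(u)=W(u)/[u(1+W(u))]$ and then $W''(u)=-W(u)^2(2+W(u))/[u^2(1+W(u))^3]\leq 0$, and multiplication by the positive constant $-\lambda\beta_d^{-1}$ preserves concavity. Finally, from $u_\vee\leq v_1,v_2\leq u_\wedge$ with $v_1+v_2=u_\wedge+u_\vee$ there is a unique $\alpha\in[0,1]$ with $v_1=\alpha u_\wedge+(1-\alpha)u_\vee$ and $v_2=(1-\alpha)u_\wedge+\alpha u_\vee$; applying concavity of $g$ to each and summing yields $g(v_1)+g(v_2)\geq g(u_\wedge)+g(u_\vee)$, which pushes through $\phi=g\circ h$ to give the claimed inequality.

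The main obstacle will be recognising this hidden modular-plus-concave structure and correctly tracking signs: $\beta_d<0$ flips the natural monotonicity of $\psi_s$, which in turn flips the ordering between $u_\wedge,u_\vee$ and $v_1,v_2$, so that the inequality ends up pointing in exactly the direction produced by concavity of $g$. The differential calculation for $W''$ is standard but worth citing from a reference on the Lambert $W$ function rather than re-deriving in the main text.
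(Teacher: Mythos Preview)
Your proposal is correct, and part (ii) takes a genuinely different and cleaner route than the paper.

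For part (i), your compositional argument is essentially the same as the paper's: the paper computes $\partial\phi/\partial z_i$ explicitly via the chain rule and the known derivative $W'(y)=W(y)/[y(1+W(y))]$, then reads off the sign. You do the same thing qualitatively by tracking monotonicity through $g\circ h$. Nothing new here.

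For part (ii), the two arguments diverge. The paper fixes a single coordinate $i$, distinguishes cases according to whether $\gamma_{i,t}(x+1_s)\geq\gamma_{i,t}(x+1_{s'})$, defines an auxiliary scalar function $f_\theta(\alpha_{i,t})$ equal to the left-hand side with $\alpha_{i,t}$ treated as a free variable, differentiates, and uses $W''\leq 0$ to conclude that $f_\theta$ is non-decreasing; it then shrinks $\alpha_{i,t}$ down to $\beta_{i,t}$ and repeats over all coordinates $i\in S$ to collapse the $\max$ to the $\min$. Your argument instead recognises the inequality as submodularity of $\phi$, observes that $h(z)=\sum_s\psi_s(z_s)$ is \emph{modular} (because it is coordinatewise separable), and then applies concavity of $g$ once via a two-point majorisation ($v_1,v_2$ are convex combinations of $u_\wedge,u_\vee$ with complementary weights). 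Both arguments ultimately rest on $W''\leq 0$, but your route isolates the structural reason --- a concave scalar function composed with a modular function is automatically submodular --- and avoids the coordinate-by-coordinate induction. The paper's approach has the minor advantage of staying inside elementary calculus without invoking the modular/submodular vocabulary, but yours is shorter and makes the mechanism transparent.
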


\begin{proof}
\begin{enumerate}[(i)]
\item It is useful to state the first derivative of $W$, which is
\begin{equation}\label{eq:W1stderi}
\frac{\mathrm{d}W}{\mathrm{d}y}(y)=\frac{W(y)}{y(1+W(y))}.
\end{equation}
It suffices to show that the first partial derivative of $W$ with respect to the components $z_i$ for all 
$i \in S$ is negative. To this end, fix any $i \in S$. Setting $y=\sum_s \psi_s(z_s)$, gives
\begin{equation}\label{eq:1stpartderi}
\begin{split}
\frac{\partial \phi}{\partial z_i}(z) =& -\lambda \beta_d^{-1} \frac{\mathrm{d} W}{\mathrm{d}y}(y)\frac{\partial y}{\partial z_i}(z) \\
=&-\lambda\beta_d^{-1}\frac{W(\sum_s \psi_s(z_s))}{\left[\sum_s \psi_s(z_s)\right](1+W\left(\sum_s \psi_s(z_s)\right)}\\
&\times \beta_d \exp(\beta_c+\beta_i+\beta_d (z_i-r)-1)\\
=&-\lambda \frac{W(\sum_s \psi_s(z_s))}{1+W\sum_s \psi_s(z_s)}\frac{\psi_i(z_i)}{\sum_s \psi_s(z_s)},
\end{split}
\end{equation}
where the last equality follows from the definition of $\psi_i$ in \eqref{eq:psi}. The customer arrival rate $\lambda \in (0,1)$. The first fraction in \eqref{eq:1stpartderi} lies in $(0,1)$ as $W(y)\geq 0$ for all $y\in $dom$(W)$, while the second one lies in $(0,1]$ as $\psi_s(y) > 0$ for all $y \in$ dom$(\psi_s)$ for all $s\in S$ and $\bar{s} \geq 1$. Therefore, $\partial \phi / \partial z_i (z) \in (-1,0)$ for all $i\in S$ and hence, it is negative.

\item Fix any $i \in S$. Let $\alpha_{i,t}:=\max\{\gamma_{i,t}(x+1_{s}),\gamma_{i,t}(x+1_{s'})\}$ and $\beta_{i,t}:=\min\{\gamma_{i,t}(x+1_{s}),\gamma_{i,t}(x+1_{s'})\}$ and notice that $\alpha_{i,t} \geq \beta_{i,t}$. Let us distinguish two cases.

\emph{Case I:} Suppose that $\gamma_{i,t}(x+1_{s})\geq \gamma_{i,t}(x+1_{s'})$. For all $j\in S, j \neq i$, define $\epsilon_{j,t}^\alpha:=\gamma_{j,t}(x+1_{s})$, $\epsilon_{j,t}^\beta:=\gamma_{j,t}(x+1_{s'})$ and $\epsilon_{j,t}^{\alpha\beta}:=\gamma_{j,t}(x+1_{s}+1_{s'}).$ Under these assignments, the left-hand side of \eqref{eq:min_procedure} can be equivalently written as
\begin{align}\label{eq:phi_epsilon}
&\phi(\gamma_t(x+1_{s'})) -\phi(\max\{\gamma_t(x+1_{s}),\gamma_t(x+1_{s'})\}) + \phi(\gamma_t(x+1_{s}))\nonumber\\
=&\phi(\epsilon_{1,t}^\beta,\dots,\beta_{i,t},\dots,\epsilon_{\bar{s},t}^\beta) -\phi(\epsilon_{1,t}^{\alpha\beta},\dots,\alpha_{i,t},\dots,\epsilon_{\bar{s},t}^{\alpha\beta}) \nonumber\\
&+ \phi(\epsilon_{1,t}^{\alpha},\dots,\alpha_{i,t},\dots,\epsilon_{\bar{s},t}^{\alpha}).
\end{align}
Define the scalar function $f_\theta: A \mapsto \Re$, where $A$ contains all real numbers no smaller than $\beta_{i,t}$ and $\theta:=\{\{\epsilon_{j,t}^\alpha,\epsilon_{j,t}^\beta, \epsilon_{j,t}^{\alpha\beta}\}_{j\neq i},\beta_{i,t}\}$, such that
\begin{equation}
\begin{split}
f_\theta(\alpha_{i,t})=\;& \phi(\epsilon_{1,t}^\beta,\dots,\beta_{i,t},\dots,\epsilon_{\bar{s},t}^\beta)\\
&- \phi(\epsilon_{1,t}^{\alpha\beta},\dots,\alpha_{i,t},\dots,\epsilon_{\bar{s},t}^{\alpha\beta})\\
&+ \phi(\epsilon_{1,t}^{\alpha},\dots,\alpha_{i,t},\dots,\epsilon_{\bar{s},t}^{\alpha}).
\end{split}
\end{equation}
Consider the derivative of $f_\theta$, given by
\begin{equation}
\frac{\mathrm{d}f_\theta}{\mathrm{d}\alpha_{i,t}} (\alpha_{i,t}) = -\frac{\partial\phi}{\partial \alpha_{i,t}}(z^{\alpha\beta})+\frac{\partial\phi}{\partial \alpha_{i,t}}(z^{\alpha}),
\end{equation}
where we have defined $z^{\alpha\beta} := [\epsilon_{1,t}^{\alpha\beta},\dots,\alpha_{i,t},\dots,\epsilon_{\bar{s},t}^{\alpha\beta}]$ as well as $z^{\alpha} := [\epsilon_{1,t}^{\alpha},\dots,\alpha_{i,t},\dots,\epsilon_{\bar{s},t}^{\alpha}]$. We compute the derivative of $\phi$ from the first equation in \eqref{eq:1stpartderi} to arrive at
\begin{equation}
\frac{\mathrm{d}f_\theta}{\mathrm{d}\alpha_{i,t}} (\alpha_{i,t}) = \lambda \beta_d^{-1} \frac{\mathrm{d} W}{\mathrm{d}y}(y^{\alpha\beta})\frac{\partial y^{\alpha\beta}}{\partial \alpha_{i,t}}(z^{\alpha\beta}) - \lambda \beta_d^{-1} \frac{\mathrm{d} W}{\mathrm{d}y}(y^{\alpha})\frac{\partial y^{\alpha}}{\partial \alpha_{i,t}}(z^{\alpha}),
\end{equation}
where $y^{\alpha\beta}:=\sum_s \psi_s(z_s^{\alpha\beta})$ and  $y^{\alpha}:=\sum_s \psi_s(z_s^{\alpha})$. Substituting for the derivatives of $y^{\alpha\beta}$ and $y^{\alpha}$ and simplifying yields
\begin{equation}
\frac{\mathrm{d}f_\theta}{\mathrm{d}\alpha_{i,t}} (\alpha_{i,t}) =\lambda\psi_i(\alpha_{i,t})\left[\frac{\mathrm{d}W}{\mathrm{d}z}(z^{\alpha\beta})-\frac{\mathrm{d}W}{\mathrm{d}z}(z^{\alpha})\right] \geq 0.
\end{equation}
The inequality follows from noting that $z^\alpha \geq z^{\alpha\beta}$ and that $W$ has a negative second derivative over its domain. We conclude that $f_{\theta}$ is non-decreasing in $\alpha_{i,t}$, which means that $f_{\theta}$ is non-increasing by decreasing $\alpha_{i,t}$ to its minimum value $\alpha_{i,t} = \beta_{i,t}$. Repeating this minimisation for all $i\in S$, we obtain the following bound:
\begin{align}
& \phi(\gamma_t(x+1_{s'})) -\phi(\max\{\gamma_t(x+1_{s}),\gamma_t(x+1_{s'})\}) + \phi(\gamma_t(x+1_{s})) \nonumber\\
\geq\;& \phi(\min\{\gamma_t(x+1_{s}),\gamma_t(x+1_{s'})\}) - \phi(\min\{\gamma_t(x+1_{s}),\gamma_t(x+1_{s'})\})\nonumber\\
&+ \phi(\min\{\gamma_t(x+1_{s}),\gamma_t(x+1_{s'})\})\nonumber\\
=\;& \phi(\min\{\gamma_t(x+1_{s}),\gamma_t(x+1_{s'})\}),
\end{align}
as required. 

\emph{Case II:} Suppose that the roles of $s$ and $s'$ are now reversed, i.e.\ $\gamma_{i,t}(x+1_{s'})\geq \gamma_{i,t}(x+1_{s})$. Via symmetric arguments, we reach the same conclusion as in Case I.

As both cases reach the same conclusion and collectively exhaust all possibilities, this concludes our proof and shows that \eqref{eq:min_procedure} holds.
\end{enumerate}
\end{proof}
\subsection{Proof of Proposition \ref{pr:oppcost}(i)}\label{ap:uncon}
We start by reformulating the DP in \eqref{eq:rm} in terms of $\phi$. Fix any $t \in T\setminus\{1\}\cup\{\bar{t}+1\}$. The unique optimisers of the unconstrained optimisation problem at time $t-1$, denoted by $d_{s}^*(x)$ for all $(x,s)\in X\times S$, is given by \citet{YANG2017} based on the development of \citet{DONGETAL2009} as
\begin{equation}\label{eq:d_star}
d_{s}^*(x) = \gamma_{s,t}(x) - r - \beta_d^{-1}h_t(x)
\end{equation}
for all $s \in S$, where $h_t(x)$ is the unique solution of
\begin{equation}\label{eq:h}
(h_t(x)-1)\exp\left(h_t(x)\right) = \sum_s \exp\left(\beta_c + \beta_s + \beta_d(\gamma_{s,t}(x) - r)\right).
\end{equation}
We rewrite \eqref{eq:h} equivalently as
\begin{align}
\iff (h_t(x)-1)\exp\left(h_t(x)-1\right) =& \sum_s \exp\left(\beta_c + \beta_s + \beta_d(\gamma_{s,t}(x) - r)-1\right)\nonumber\\
\iff (h_t(x)-1)\exp\left(h_t(x)-1\right) =& \sum_s \psi_s(\gamma_{s,t}(x)),
\end{align}
where we have used $\psi_s$ from \eqref{eq:psi}. By the definition of $W$, we obtain
\begin{equation}
h_t(x) = 1 + W\left(\sum_s \psi_s(\gamma_{s,t}(x))\right).\label{eq:h_ito_W}
\end{equation}
Now, we can substitute \eqref{eq:h_ito_W} into \eqref{eq:d_star}:
\begin{equation}\label{eq:d_star_ito_W}
d_{s}^*(x) = \gamma_{s,t}(x) - r - \beta_d^{-1}\left[1 + W\left(\sum_s \psi_s(\gamma_{s,t}(x))\right)\right].
\end{equation}
Finally we can substitute \eqref{eq:d_star_ito_W} into the unconstrained version of \eqref{eq:rm} to obtain
\begin{equation}\label{eq:dpitoW}
\begin{split}
\mathcal{T}V_{t}(x)=& \sum_s p_{s}(d^*(x))\left\{r+\gamma_{s,t}(x) - r - \beta_d^{-1}\left[1 + W\left(\sum_{s'} \psi_s(\gamma_{s',t}(x))\right)\right]\right.\\
&\left.\vphantom{\sum_a}-\gamma_{s,t}(x)\right\}+V_t(x)\\
=& \sum_s p_{s}(d^*(x))\left\{- \beta_d^{-1}\left[1 + W\left(\sum_{s'} \psi_s(\gamma_{s',t}(x))\right)\right]\right\}+V_t(x),
\end{split}
\end{equation}
where we have defined $d^*(x):=[d_1^*(x),d_2^*(x),\dots,d_{\bar{s}}^*(x)]^{\intercal}$. We now substitute the customer choice model $p$ evaluated at the optimiser $d^*(x)$  into \eqref{eq:dpitoW}:
\begin{equation}\label{eq:unc_stage_opt_2}
\begin{split}
\mathcal{T}V_{t}(x)=\;& \sum_s \frac{\lambda\exp(\beta_c+\beta_s+\beta_d d_{s}^*(x) )}{\sum_{s''}\exp(\beta_c+\beta_{s''}+\beta_d d_{s''}^*(x))+1}\\
& \times \left\{- \beta_d^{-1}\left[1 + W\left(\sum_{s'} \psi_s(\gamma_{s',t}(x))\right)\right]\right\}+V_t(x).
\end{split}
\end{equation}
Note that -- using the definitions of $d_{s}^*$ and $h_t$ -- the following relationship holds:
\begin{equation}\label{eq:util_ito_h}
\begin{split}
&\sum_s \exp(\beta_c+\beta_s+\beta_d d_{s}^*(x))\\
=&\sum_s \exp\left(\beta_c+\beta_s+\beta_d \left(\gamma_{s,t}(x)-r-\beta_d^{-1}h_t(x)\right)\right)\\
=&\sum_s \exp\left(\beta_c+\beta_s+\beta_d \left(\gamma_{s,t}(x)-r\right)\right)\exp(-h_t(x))\\
=&\left(h_t(x)-1\right)\exp(h_t(x))\exp(-h_t(x))\\
=& h_t(x)-1,
\end{split}
\end{equation}
where the third equality follows from \eqref{eq:h}. Substituting \eqref{eq:util_ito_h} into \eqref{eq:unc_stage_opt_2}, we obtain
\begin{equation}
\mathcal{T}V_{t}(x)=\lambda \frac{h_t(x)-1}{h_t(x)}\left\{- \beta_d^{-1}\left[1 + W\left(\sum_s \psi_s(\gamma_{s,t}(x))\right)\right]\right\}+V_t(x).
\end{equation}
Substituting for $h_t(x) $ using \eqref{eq:h_ito_W} yields the desired expression:
\begin{align}\label{eq:dp_phi}
\mathcal{T}V_{t}(x)=\;&\frac{\lambda W\left(\sum_s \psi_s(\gamma_{s,t}(x))\right)}{1+W\left(\sum_s \psi_s(\gamma_{s,t}(x))\right)}\left\{- \beta_d^{-1}\left[1 + W\left(\sum_s \psi_s(\gamma_{s,t}(x))\right)\right]\right\}\nonumber\\
&+V_t(x)\nonumber\\
=\;& - \lambda \beta_d^{-1} W\left(\sum_s \psi_s(\gamma_{s,t}(x))\right)+V_t(x)\nonumber\\
=\;&\phi(\gamma_t(x))+V_t(x),
\end{align}
where the last inequality follows from the definition of $\phi$ in \eqref{eq:phi}. By Assumption \ref{as:concavecost}, the terminal condition of the DP satisfies the property of increasing opportunity costs.

We can now prove Proposition \ref{pr:oppcost}(i) by showing that a monotonic mapping exists between 
$\gamma_{s,t-1}(x+1_{s'})-\gamma_{s,t-1}(x)$ and $\gamma_{s,t}(x+1_{s'})-\gamma_{s,t}(x)$ for all $(x,s,s',t)\in X\times F(x) \times F(x) \times (T\cup\{\bar{t}+1\}\setminus\{1\})$.

To this end, fix any $(x,s,t) \in X\times F(x) \times (T\cup\{\bar{t}+1\}\setminus\{1\})$. By using the definition of opportunity costs and \eqref{eq:dp_phi} we can write the opportunity cost in state $x$ with respect to an arbitrary slot $s\in S$ at time $t-1$ as
\begin{equation}\label{eq:opp_cost_t-1}
\gamma_{s,t-1}(x)=\gamma_{s,t}(x)+\phi(\gamma_t(x))-\phi(\gamma_t(x+1_{s})).
\end{equation}
Fix any $s'\in F(x)$, such that $s'\neq s$. To prove the theorem, we require
\begin{equation}\label{eq:opp_cost_diff}
\gamma_{s,t-1}(x+1_{s'}) - \gamma_{s,t-1}(x) > 0
\end{equation}
for all $t\in T\cup\{\bar{t}+1\}\setminus\{1\}$. Substitute \eqref{eq:opp_cost_t-1} into the left-hand side of \eqref{eq:opp_cost_diff} to obtain
\begin{equation}
\begin{split}
&\gamma_{s,t-1}(x+1_{s'})-\gamma_{s,t-1}(x) \\
=\; & \gamma_{s,t}(x+1_{s'})-\gamma_{s,t}(x)+\phi(\gamma_t(x+1_{s'}))\\ 
&-\phi(\gamma_t(x+1_{s}+1_{s'}))- \phi(\gamma_t(x))+ \phi(\gamma_t(x+1_{s})).
\end{split}
\end{equation}
First note that $\gamma_{t}(x+1_{s}+1_{s'}) \geq \gamma_t(x+1_{s})$ and $\gamma_{t}(x+1_{s}+1_{s'}) \geq \gamma_t(x+1_{s'})$. As $\phi(\gamma_{t}(x+1_{s}+1_{s'}))$ is decreasing in its argument by Lemma \ref{le:phi}(i) and since it is subtracted on the right-hand side of the above equation, we can create the following lower bound:
\begin{equation}\label{eq:target_ineq}
\begin{split}
& \gamma_{s,t-1}(x+1_{s'})-\gamma_{s,t-1}(x) \\
\geq\; & \gamma_{s,t}(x+1_{s'})-\gamma_{s,t}(x)+\phi(\gamma_t(x+1_{s'}))\\ & -\phi(\max\{\gamma_t(x+1_{s}),\gamma_t(x+1_{s'})\})
- \phi(\gamma_t(x))+ \phi(\gamma_t(x+1_{s})).
\end{split}
\end{equation}
Using Lemma \ref{le:phi}(ii), we bound \eqref{eq:target_ineq} from below by
\begin{align}\label{eq:4terms}
\gamma_{s,t-1}(x+1_{s'})-\gamma_{s,t-1}(x) \geq\; & \phi(\min\{\gamma_t(x+1_{s}),\gamma_t(x+1_{s'})\})-\phi(\gamma_t(x)) \nonumber\\
&+\gamma_{s,t}(x+1_{s'})-\gamma_{s,t}(x),
\end{align}
where the minimum is taken element-wise. Since $\phi$ is both positive by definition, we construct a lower bound on \eqref{eq:4terms} by dropping the $\phi(\min\{\gamma_t(x+1_{s}),\gamma_t(x+1_{s'})\})$ term on the right-hand side. Furthermore, since $\phi$ is decreasing in its argument by Lemma \ref{le:phi}(i), we create another lower bound on \eqref{eq:4terms} by setting $\phi(\gamma_t(x))=\mathbf{0}$. Hence, we obtain
\begin{equation}
\begin{split}
&\gamma_{s,t-1}(x+1_{s'})-\gamma_{s,t-1}(x) \\
>\;& -\phi(\mathbf{0}) +\gamma_{s,t}(x+1_{s'})-\gamma_{s,t}(x)\\
=\;& \lambda\beta_d^{-1} W\left(\sum_s\psi_s(0)\right) +\gamma_{s,t}(x+1_{s'})-\gamma_{s,t}(x).
\end{split}
\end{equation}
Since only $\beta_d^{-1}$ is negative, $\lambda\beta_d^{-1} W\left(\sum_s\psi_s(0)\right)<0$, independent of the choice of $(s,s',t,x)$. Therefore, the above inequality describes a monotonic mapping from $\gamma_{s,t}(x+1_s')-\gamma_{s,t}(x)$ to $\gamma_{s,t-1}(x+1_s')-\gamma_{s,t-1}(x)$ for all $(s,s',t-1,x)\in F(x)\times F(x) \times T \times X$, such that $s\neq s'$. The bound on $\gamma_{s,t}(x+1_s')-\gamma_{s,t}(x)$ will therefore decrease as $t$ decreases. Hence, $\gamma_{s,t}(x+1_s')-\gamma_{s,t}(x)$ will be minimal at $t=1$. Using the monotonicity of this mapping, we can find a $\lambda$ for which $\gamma_{s,1}(x+1_s')-\gamma_{s,1}(x) > 0 $ by repetitively applying the above equation starting from the terminal condition at $t=\bar{t}+1$ to obtain
\begin{equation}
\gamma_{s,1}(x+1_{s'})-\gamma_{s,1}(x) \geq \bar{t}\lambda\beta_d^{-1} W\left(\sum_s\psi_s(0)\right) +\gamma_{s,\bar{t}+1}(x+1_{s'})-\gamma_{s,\bar{t}+1}(x),
\end{equation}
where the right-hand side is positive if
\begin{equation}\label{eq:lambda*}
\begin{split}
0&<\bar{t}\lambda\beta_d^{-1} W\left(\sum_s\psi_s(0)\right) +\gamma_{s,\bar{t}+1}(x+1_{s'})-\gamma_{s,\bar{t}+1}(x)\\
\iff \lambda &< -\beta_d \frac{\gamma_{s,\bar{t}+1}(x+1_{s'})-\gamma_{s,\bar{t}+1}(x)}{\bar{t}W\left(\sum_s\psi_s(0)\right)}
\end{split}
\end{equation}
for all $(x,s,s')\in X\times F(x)\times F(x) $, such that $s\neq s'$. The right-hand side of the above expression is strictly positive, because opportunity costs are increasing at the terminal condition by Assumption \ref{as:concavecost} and therefore, the numerator of the fraction is positive, $W$ only takes positive values, $\bar{t}>0$ and $\beta_d<0$. As both $X$ and $F(x)$ are finite sets, a small enough $\lambda>0$ can be found that satisfies all inequalities described by \eqref{eq:lambda*}. Therefore, a $\lambda > 0$ exists, such that $\gamma_{s,t}(x+1_{s'}) > \gamma_{s,t}(x)$ for all $(x,s,s',t)\in X\times F(x)\times F(x) \times T$, such that $s\neq s'$.
\qed

\subsection{Proof of Proposition \ref{pr:oppcost}(ii)}\label{ap:con}
Fix any $(x,t) \in X \times T$. Define $u:=\gamma_{t}(x)$ and consider the function $w: \Re_+^{\bar{s}} \mapsto \Re^{\bar{s}}$ defined by
\begin{equation}
w(u):=u-r-\beta_d^{-1}\left[1+W\left(\sum_s \psi_s(u)\right)\right].
\end{equation}
In comparison with \eqref{eq:d_star_ito_W}, notice that $w(u)$ is mathematically identical to $d^*(x)=[d_1^*(x),d_2^*(x),\dots,d_{\bar{s}}^*(x)]$. Furthermore, $W$ and $\psi_s$ for all $s\in S$ are invertible functions. In particular, their inverses in the domain of interest are
\begin{equation}
\begin{split}
x =& W(x)\exp\left(W(x)\right) \text{ and}\\
u =& \left(\ln\left(\psi_s(u)\right)-\beta_c-\beta_s+1\right)\beta_d^{-1}+r
\end{split}
\end{equation}
for all $s \in S$. Therefore, $w$ is a composition of invertible functions and hence, the mapping between $w$ and $u$ is bijective. Therefore, the mapping between $d^*(x)$ and $\gamma_{t}(x)$, equivalent to the mapping between $w$ and $u$, is bijective. 

If we constrain $d^*(x)$ to $D \subset \Re^{\bar{s}}$, we can conclude that there still exists a bijective mapping between $D$ and (an unknown) $\Gamma \subset \Re_+^{\bar{s}}$ corresponding to the range of values that $\gamma_t(x)$ can take in this constrained scenario. Conversely, this means that no matter which $d^*(x) \in D$ maximises the \textit{constrained} stage optimisation problem, there exists $\hat{\gamma}_t(x)\in \Gamma$, which is linked to the same $d^*(x)$ in the \textit{unconstrained} problem. In other words, there exists $\hat{\gamma}_t(x)\in \Gamma$, which produces the same $V_{t-1}(x)$ in the \textit{unconstrained} problem as $\gamma_t(x)$ does in the \textit{constrained} problem. Due to this equivalence, the following statement is a necessary and sufficient condition for Proposition \ref{pr:oppcost}(i) to hold: Evaluating the \textit{unconstrained} problem at $\hat{\gamma}_t(x)$, i.e.\ $V_{t-1}(x) = \phi(\hat{\gamma}_t(x))$ for all $x\in X$, there exists a sufficiently small $\lambda >0$ that yields a value function at time $t-1$, whose opportunity cost is increasing in $x$. From Proposition \ref{pr:oppcost}(i), we know that this statement holds true for all opportunity costs $\hat{\gamma}_{t}(x)\in \Re_+^{\bar{s}}$ under Assumption \ref{as:concavecost}. Since $\hat{\gamma}_t(x)\in \Gamma \subset \Re_+^{\bar{s}}$, there exists a sufficiently small $\lambda>0$ such that the opportunity cost will also be increasing in the \textit{constrained} DP.\qed

\subsection{Completing the proof of Lemma \ref{le:conc}(ii)}\label{ap:concproof}
From Definition \ref{de:conc_ext}(b), $g_t$ is concave-extensible in $(x,p)$ if
\begin{equation}\label{eq:gtarget}
g_t(x,p) \geq \sum_{q \in Q}\mu_q g_t\left(q^{(x)},q^{(p)}\right)
\end{equation}
for all $(x,p)\in X\times P$ and for all enclosing sets $Q$, such that $[x,p]^{\intercal}=\sum_{q \in Q}\mu_q \left[q^{(x)},q^{(p)}\right]^\intercal$. We show that this inequality holds by starting from the right-hand side and substituting for $g_t(q^{(x)},q^{(p)})$:
\begin{equation}\label{eq:g_conc_ineq}
\begin{split}
\sum_{q \in Q}\mu_q g_t(q^{(x)},q^{(p)})=&\sum_{q \in Q}\mu_q \left[ \sum_{s}q_{s}^{(p)}\left\{V_t(q^{(x)}+1_{s})-V_t(q^{(x)})\right\}+V_t(q^{(x)})\right]\\
=&\sum_{q \in Q}\mu_q \left[ \sum_{s}q_{s}^{(p)}V_t(q^{(x)}+1_{s})+\big(1-\sum_{s}q_{s}^{(p)}\big)V_t(q^{(x)})\right],
\end{split}
\end{equation}
where we note that each of the summed terms in square brackets is a convex combination on the set $A(q^{(x)}):=\{q^{(x)} \cup \{q^{(x)}+1_s\}_{s\in F(q^{(x)})}\}$, where the set of indices $s$ is $F(q^{(x)})$ instead of $S$, since $q_s^{(p)}=0$ for all $s\notin F(q^{(x)})$, i.e.\ assigning zero transition probability to infeasible slots. To show that \eqref{eq:gtarget} holds, we now derive the supporting result that there exists a small enough $\lambda>0$, such that the concave closure $\tilde{V}_t(y)$, evaluated at any point $y \in X$, is a hyperplane on the set $A(y)$, which will make it possible to simplify \eqref{eq:g_conc_ineq} further. 

Fix any $y\in X$ and consider the unit hypercube in the positive direction of $y$, which we define as $B(y):=\set{z}{y\leq z \leq y+\sum_{s\in F(y)}1_s}$. We will show that only points in $A(y)$ form the concave closure around $y$ by demonstrating that for all $y'\in B(y)\setminus A(y)$, the line segment between $(y,V_t(y))$ and $(y',V_t(y'))$ lies below a second line segment between two other points $(z,V_t(z))$ and $(z',V_t(z'))$ for some $(z,z')\in B(y)\times B(y)$, i.e.\ we will show that there exists some $(z,z')\in B(y)\times B(y)$, such that
\begin{equation}\label{eq:lineseg}
\alpha V_t(y)+(1-\alpha)V_t(y') < \beta V_t(z)+(1-\beta)V_t(z'),
\end{equation}
where $0\leq \alpha \leq 1 \text{ and }  0\leq \beta \leq 1$, such that $\alpha y + (1-\alpha) y' = \beta z + (1-\beta) z'$ for all $y'\in B(y)\setminus A(y)$. This means that the line segments cannot be part of the concave closure $\tilde{V}_t$. We show this result by induction on $|F(y)|$. Consider the base case when $|F(y)|=1$, then \eqref{eq:lineseg} is trivially satisfied as there is only a single element $s\in F(y)$, meaning that the set $B(y)\setminus A(y)=\emptyset$. Let $n:=|F(y)|$. Suppose by means of an induction hypothesis that \eqref{eq:lineseg} holds for all cardinalities of $F(y)$ up to and including $n-1$. Then the only line segment that we need to consider is the one connecting $y$ and $y'=y+\sum_{s\in F(y)}1_s$, because otherwise, we are in a lower-dimensional case, for which \eqref{eq:lineseg} holds by the induction hypothesis. For this choice of $(y,y')$, we can find a quadruple $(z,z',\alpha,\beta)$ that satisfies \eqref{eq:lineseg} by repetitively invoking Proposition \ref{pr:oppcost}(ii) as follows: There exists a sufficiently small $\lambda>0$, such that 
\begin{equation}
\begin{split}
&\gamma_{1}(y)\\
&<\gamma_{1}(y+1_{2})\\
&<\gamma_{1}(y+1_{2}+1_{3})\\
&\vdots\\
&<\gamma_{1}\big(y+\sum_{s\in \sigma}1_s\big),
\end{split}
\end{equation}
where we have defined $\sigma:= F(y)\setminus\{1\}$, which -- strictly speaking -- depends on $y$ and $s=1$, but we neglect this to ease notation. We can expand the first and last line of the above chained inequality by using the definition of opportunity costs:
\begin{equation}
\begin{split}
\gamma_{1}(y)&< \gamma_{1}\big(y+\sum_{s\in \sigma}1_s\big)\\
\iff V_t(y)-V_t(y+1_{1})&< V_t\big(y+\sum_{s\in \sigma}1_s\big)-V_t\big(y+\sum_{s\in F(y)}1_s\big)\\
\iff V_t(y)-V_t(y+1_{1})&< V_t\big(y+\sum_{s\in \sigma}1_s\big)-V_t(y'),
\end{split}
\end{equation}
where we have used $y'=y+\sum_{s\in F(y)}1_s$ as previously. Rearranging and multiplying both sides by $1/2$ yields
\begin{equation}\label{eq:lineineq}
\iff \frac{1}{2}V_t(y)+\frac{1}{2}V_t(y')< \frac{1}{2}V_t\big(y+\sum_{s\in \sigma}1_s\big)+\frac{1}{2}V_t(y+1_{1}).
\end{equation}
Comparing with \eqref{eq:lineseg}, we see that $\alpha=\beta=1/2$, $z=y+\sum_{s\in \sigma}1_s$ and $z'=y+1_{1}$. To illustrate this for the case when $n=3$, we plot the points $y,y',y+\sum_{s\in \sigma}1_s$ and $y+1_1$ as well as the line segments between the former and latter pair of points in Fig.\ \ref{fig:linesinbox} below.

\begin{figure}[H]
	\centering
	\includegraphics[width=0.7\columnwidth,trim=2cm 2cm 2cm 2cm,clip]{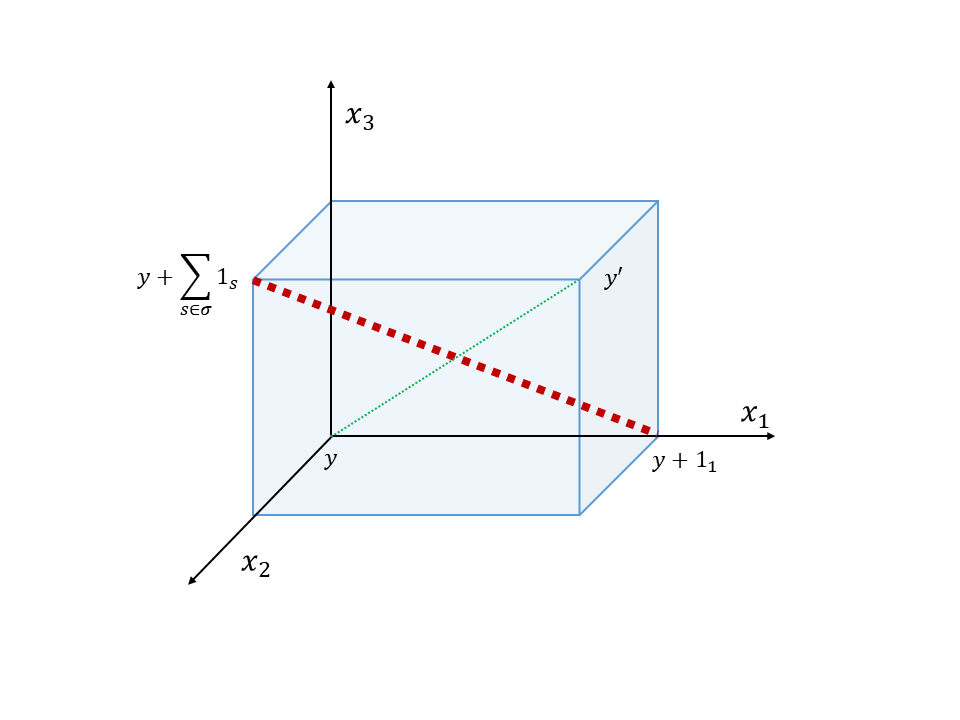}
	\caption{Thin, green, dotted line segment between $y$ and $y'$ and bold, red, dotted line segment between $y+\sum_{s\in \sigma}1_s$ and $y+1_1$. By \eqref{eq:lineineq} we have that at the intersection of the two lines, the interpolation of $V_t$ between the two extreme points of the bold, red line lies above the interpolation of $V_t$ between the two extreme points of the thin, green line. The light blue, solid line box spans all the points that are contained in $B(y)$.}
	\label{fig:linesinbox}
\end{figure}

Since we have found a quadruple $(z,z',\alpha,\beta)$ that satisfies \eqref{eq:lineseg}, we conclude that no $y'\in B(y)\setminus A(y)$ can be part of the concave closure $\tilde{V}_t$ for all $y\in B(y)$ and for all $n$. Therefore, only points in $A(y)$ can be part of the concave closure $\tilde{V}_t$ for all $y\in B(y)$ and, in fact, they all are, since the $|F(y)|+1$ points in $A(y)$ uniquely define the support vectors of a hyperplane through $\left\{(y,V_t(y))\right\}_{y \in A(y)}$, i.e.\ each $y\in A(y)$ giving rise to one linearly independent equality constraint, such that all $|F(y)|$ gradients and the offset of the hyperplane are uniquely defined. This holds true for unit hypercubes $B(y)$ for all $y \in X$ and hence, there exists a small enough $\lambda > 0$, such that $\tilde{V}_t$ is a hyperplane on the set $A(y)$ for all $y \in X$.

Returning to \eqref{eq:g_conc_ineq}, we notice that the expression in square brackets is a convex combination on the set $A(q^{(x)})$, which means that $\tilde{V}_t$ is a hyperplane on this set. Hence, we can rewrite \eqref{eq:g_conc_ineq} with equality as

\begin{align}
\sum_{q \in Q}\mu_q g_t(q^{(x)},q^{(p)})=& \sum_{q \in Q}\mu_q \tilde{V}_t\left(\sum_{s}q^{(p)}_{s}1_{s}+q^{(x)}\right)\nonumber\\
\leq&\tilde{V}_t\left(\sum_{q \in Q} \mu_q\left[ \sum_{s}q^{(p)}_{s}1_{s}+q^{(x)}\right]\right)\nonumber\\
=&\tilde{V}_t\left( \sum_{s} 1_{s} \sum_{q \in Q} \mu_q q^{(p)}_{s}+x\right)\nonumber\\
=&\tilde{V}_t\left( \sum_{s} 1_{s}p_{s}+x\right)\nonumber\\
=&\sum_{s}p_{s}\left\{V_t(x+1_{{s}})-V_t(x)\right\}+V_t(x)\nonumber\\
=& g_t(x,p),
\end{align}
where the second-last equality follows from the observation that the convex combination of $V_t$ is evaluated on a set $A(x)$, for which $\tilde{V}_t$ is again a hyperplane, and the inequality is obtained by noticing that $\tilde{V}_t$ is concave in $x$ and therefore, Jensen's inequality holds. From the above derivation, we conclude that $ g_t(x,p) \geq \sum_{q \in Q}\mu_q g_t(q^{(x)},q^{(p)})$, as required. Therefore, there exists a sufficiently small $\lambda>0$, for which $g_t$ is concave-extensible in $(x,p)$ if $V_t$ is concave-extensible in $x$.\qed
\section*{References}
\bibliography{RevConcaveValueFun04}
\end{document}